\numberwithin{equation}{section}
\numberwithin{figure}{section}
\numberwithin{table}{section}
\newcommand{\lyxaddress}[1]{
\par {\raggedright #1
\vspace{1.4em}
\noindent\par}
}
\theoremstyle{plain}
\newtheorem{thm}{\protect\theoremname}
  \theoremstyle{plain}
  \newtheorem{lem}[thm]{\protect\lemmaname}
  \providecommand{\lemmaname}{Lemma}
\providecommand{\theoremname}{Theorem}
\begin{document}

\title{Error Analysis of Finite Differences and the Mapping Parameter in
Spectral Differentiation}

\date{$\:$}

\author{Divakar Viswanath }

\maketitle

\lyxaddress{{\small{}Department of Mathematics, University of Michigan (divakar@umich.edu). }}
\begin{abstract}
The Chebyshev points are commonly used for spectral differentiation
in non-periodic domains. The rounding error in the Chebyshev approximation
to the $n$-the derivative increases at a rate greater than $n^{2m}$
for the $m$-th derivative. The mapping technique of Kosloff and Tal-Ezer
(\emph{J. Comp. Physics}, vol. 104 (1993), p. 457-469) ameliorates
this increase in rounding error. We show that the argument used to
justify the choice of the mapping parameter is substantially incomplete.
We analyze rounding error as well as discretization error and give
a more complete argument for the choice of the mapping parameter.
If the discrete cosine transform is used to compute derivatives, we
show that a different choice of the mapping parameter yields greater
accuracy.
\end{abstract}

\section{Introduction}

The Chebyshev points $x_{j}=\cos(j\pi/n)$, $n=0,1,\ldots,n$, are
commonly used to discretize the interval $[-1,1]$ when the boundary
conditions are not periodic. The $m$-th derivative $f^{(m)}(x)$
may be approximated as $\sum_{k=0}^{m}f(x_{k})w_{k,m}$ where $w_{k,m}$
are differentiation weights. The rounding error in the $m$-th derivative
increases faster than $n^{2m}$ (precise asymptotics will be given
in section 2). In contrast, the rounding error error in Fourier spectral
methods increases at the much milder rate of $n^{m}$ \cite{DonSolomonoff1997,KosloffTalEzer1993}
or $n^{m+1}$.

Kosloff and Tal-Ezer \cite{KosloffTalEzer1993} introduced a mapping
technique to control the growth in rounding errors while preserving
spectral accuracy. The central idea is to replace the function $f(x)$
by the function $F(\xi)=f\left(g(\xi)\right)$ where $g:[-1,1]\rightarrow[-1,1]$,
where 
\begin{equation}
g(\xi)=\frac{\arcsin\alpha\xi}{\arcsin\alpha}\label{eq:intro-map-fn-g}
\end{equation}
is a mapping function that depends upon the parameter $\alpha\in[0,1]$.
The grid in $\xi$ is still Chebyshev with $\xi_{j}=\cos(j\pi/n)$,
and is used to define the mapped grid in $x$ as $x_{j}=\xi_{j}$
for $j=0,1,\ldots n$. The derivative is approximated using 
\[
\frac{df}{dx}=\frac{1}{g'(\xi)}\frac{dF}{d\xi}.
\]
The derivative $dF/d\xi$ is obtained using spectral differentiation
at Chebyshev points and then scaled by $1/g'(\xi)$ to obtain $df/dx$.
Higher derivatives are obtained by iteration of this technique.

The points $x_{j}$ converge to Chebyshev and equi-spaced points,
respectively, in the limits $\alpha\rightarrow0$ and $\alpha\rightarrow1$.
For $\alpha$ in-between, and usually quite close to $1$, the grid
is nearly equi-spaced and still retains spectral accuracy. Since the
grid points are not clustered quadratically near the endpoints $\pm1$,
the growth of rounding errors is milder \cite{DonSolomonoff1997,KosloffTalEzer1993}. 

The function $F(\xi)$ will have a singularity in the complex plane,
due to the mapping, even if $f(x)$ is an entire function. Inspection
of (\ref{eq:intro-map-fn-g}) shows that there are singularities at
$\xi=\pm1/\alpha$. If $f(x)$ is an entire function, such as $f(x)=\sin Kx$,
the interpolation error in $F(\xi)$ using Chebyshev points and in
$f(x)$ using the mapped grid are both controlled by the singularity
locations $\pm1/\alpha$. Kosloff and Tal-Ezer \cite{KosloffTalEzer1993}
recommended the choice of $\alpha$ determined by 
\begin{equation}
\left(\frac{1-\sqrt{1-\alpha^{2}}}{\alpha}\right)^{n}=u\label{eq:intro-kt-balance-eqn}
\end{equation}
where $u$ is the desired accuracy. Don and Solomonoff \cite{DonSolomonoff1997}
showed that taking $u$ to be the machine precision leads to accurate
derivatives. We prefer to take $u$ to be the unit roundoff (for double
precision arithmetic, the unit roundoff is $u=2^{-53}$ and the machine
epsilon is $2^{-52}$ \cite{Higham2002}) because $u$ is the quantity
that comes up naturally in rounding error analysis. However, the distinction
between unit roundoff and machine epsilon has no real consequence
in this situation. The solution of (\ref{eq:intro-kt-balance-eqn})
is given by $\alpha=2/(t+1/t)$ with $t=u^{-1/n}$.

A plausible argument for (\ref{eq:intro-kt-balance-eqn}) is that
it balances the discretization error on the left hand side with the
rounding error on the right hand side. Balancing errors is the right
idea, but it begs the question of why the $n^{2m}$ or $n^{2m+1}$
increase in rounding error is not showing up in (\ref{eq:intro-kt-balance-eqn}).
In this article we give a systematic treatment of both rounding and
discretization errors and show that (\ref{eq:intro-kt-balance-eqn})
is still the right equation regardless of the order of the differentiation
$m$. The order of differentiation $m$ introduces prefactors into
both discretization and rounding error, and these cancel off fortuitously
to leave (\ref{eq:intro-kt-balance-eqn}) as the correct equation
for the mapping parameter $\alpha$ regardless of $m$.

Computation of derivatives at Chebyshev points incurs more error when
the discrete cosine transform is used \cite{DonSolomonoff1997}, in
comparison with carefully computed differentiation matrices \cite{BaltenspergerTrummer2003,DonSolomonoff1997}.
However, the discrete cosine transform is much faster. We show that
(\ref{eq:intro-kt-balance-eqn}) can be modified to choose $\alpha$
in a way that yields slightly more accurate derivatives when the discrete
cosine transform is employed.

Sections 2 and 3 present analyses of rounding and discretization errors,
respectively, showing how the pre-factors cancel leading to (\ref{eq:intro-kt-balance-eqn}).
When $n$ is small the total error is dominated by discretization
error and when $n$ is large the total error is dominated by rounding
error. In section 3, we show that the value of $n$ at which the total
error transitions from discretization error to rounding error does
not depend upon $m$, the order of differentiation.

In section 4, we specialize arguments to the mapping (\ref{eq:intro-map-fn-g}).
We consider the slightly more general balancing equation 
\begin{equation}
\left(\frac{1-\sqrt{1-\alpha^{2}}}{\alpha}\right)^{n}=n^{\beta}u\label{eq:intro-balance-general}
\end{equation}
and find that $\beta=0$ is a good choice when accurate differentiation
matrices are used and $\beta=0.5$ is a better choice for the discrete
cosine transform.

\section{Rounding error analysis of finite differencing}

Spectral differentiation at Chebyshev points is a special case of
finite differencing. In this section, we derive rounding error bounds
assuming the method of partial products. The method of partial products
is an efficient way to calculate finite difference weights \cite{SadiqViswanath2014}.
The rounding error bounds here include the errors that arise during
the calculation of finite difference weights. Some quantities that
arise will recur in the analysis of discretization error. Comparison
to rounding error bounds which assume that the finite difference weights
are exact shows that computation of finite difference weights introduces
only a modest amount of error. Finally we give asymptotic estimates
of the error in the limit $n\rightarrow\infty$.

For floating point arithmetic, we mostly follow Higham \cite{Higham2002},
with a few modifications from \cite{NavarreteViswanath2015}. The
axiom of floating point arithmetic is $\text{fl}(x.\text{op}.y)=(x.\text{op}.y)(1+\delta)$
with $|\delta|\leq u$, where $u$ is the unit-roundoff ($2^{-53}$
for double precision arithmetic). To handle the accumulation of rounding
error, we denote $(1+\delta_{1})^{\rho_{1}}(1+\delta_{2})^{\rho_{2}}\ldots(1+\delta_{n})^{\rho_{n}}$,
with each $\rho_{i}$ equal to $+1$, $0$, $-1$ and $|\delta_{i}|\leq u$,
by $1+\theta_{n}$. In our convention, each occurrence of $\theta_{n}$
is local, which means that two occurrences of $\theta_{n}$ , even
in the same equation, are not assumed to be equal. The quantity $\theta_{n}$
stands for any quantity that may be realized as the accumulated relative
error of $n$ or fewer multiplications and divisions. It satisfies
$|\theta_{n}|\leq\gamma_{n}$, where $\gamma_{n}=nu/(1-nu)$, as long
as $nu<1$. Whenever $\gamma_{n}$ occurs, it is implicitly assumed
that $nu<1$.

Computed quantities are hatted. Thus if $s=x_{1}+\cdots+x_{n}$, with
each $x_{i}$ a floating point number, the computed quantity is denoted
$\hat{s}$. If the addition is from left to right, we may write 
\[
\hat{s}=x_{1}(1+\theta_{n-1})+x_{2}(1+\theta_{n-1})+x_{3}(1+\theta_{n-2})+\cdots+x_{n}(1+\theta_{1}).
\]
Conventions stated above allow us to rewrite this as 
\[
\hat{s}=x_{1}(1+\theta_{n-1})+x_{2}(1+\theta_{n-1})+x_{3}(1+\theta_{n-1})+\cdots+x_{n}(1+\theta_{n-1}).
\]
This device will be employed frequently. Notice that it is a mistake
to factor out $(1+\theta_{n-1})$ in the right hand side, because
each $\theta_{n-1}$ is a local variable and two distinct instances
are not necessarily equal. However, we may write $\hat{s}$ as $\sum x_{j}(1+\theta_{n-1})$,
with the assumption that each $\theta_{n-1}$ inside the summation
is different.

\subsection{Bounds for rounding error}

Assume the $n+1$ grid points to be $x_{0},x_{1},\ldots,x_{n}$. The
weight $w_{k,m}$ in the finite difference formula $f^{(m)}(x)=\sum_{k=0}^{n}w_{k,m}f(x_{k})+\text{error}$
is given by 
\begin{equation}
w_{k,m}=\frac{d^{m}\ell_{k}(x)}{dx^{m}}=w_{k}\frac{d^{m}}{dx^{m}}\prod_{j=0,j\neq k}^{n}(x-x_{j}),\label{eq:bnds-wkm-full}
\end{equation}
where $\ell_{k}(x)$ is the Lagrange cardinal function $\prod_{j\neq k}(x-x_{j})/\prod_{j\neq k}(x_{k}-x_{j})$
and $w_{k}$ is the Lagrange weight $1/\prod_{j\neq k}(x_{k}-x_{j})$. 

If we assume $x=0$, by shifting the grid if necessary, then 
\begin{equation}
w_{k,m}=(-1)^{n-m}m!w_{k}S_{n-m}\left(\left\{ x_{0},\ldots,x_{n}\right\} -\left\{ x_{k}\right\} \right),\label{eq:bnds-wkm-atzero}
\end{equation}
where $S_{n-m}$ is the elementary symmetric function of order $n-m$
\cite{SadiqViswanath2014}. The elementary symmetric function $S_{n-m}$
is the sum of $\binom{n}{n-m}$ terms each of which is a product of
a selection of $n-m$ entries out of the $n$ (all grid points excluding
$x_{k}$). $S_{0}$ is defined as $1$. 

In the method of partial products \cite{SadiqViswanath2014}, the
weight $w_{k,m}$ is computed as follows. The polynomials $\prod_{j=0}^{k}(x-x_{j})$
and $\prod_{j=k}^{n}(x-x_{j})$ are denoted by $L_{k}$ and $R_{k}$,
respectively. Define 
\begin{align}
w'_{k,m} & =\text{coeff of \ensuremath{x^{m}}in }L_{k-1}R_{k+1}\nonumber \\
 & =(-1)^{n-m}\sum_{m_{1},m_{2}}S_{k-m_{1}}\left(x_{0},\ldots,x_{k-1}\right)S_{n-k-m_{2}}\left(x_{k+1},\ldots,x_{n}\right),\label{eq:bnds-wkm-convolution}
\end{align}
where the sum is taken over nonzero integers $m_{1},m_{2}$ satisfying
$m_{1}+m_{2}=m$, $k-m_{1}\geq0$, and $n-k-m_{2}\geq0$. The finite
difference weight $w_{k,m}$ is obtained as $m!w_{k}w'_{k,m}$, where
$w_{k}$ is the Lagrange weight at $z_{k}$.

The elementary symmetric functions that appear in (\ref{eq:bnds-wkm-convolution})
are computed by forming the products $L_{k}$ and $R_{k}$, recursively
\cite{SadiqViswanath2014}. In effect the recurrence 
\begin{equation}
S_{N-m}(y_{1},\ldots,y_{N})=\begin{cases}
y_{N}S_{N-1}(y_{1},\ldots,y_{N-1})\quad\text{if \ensuremath{m=0}}\\
S_{N-m}(y_{1},\ldots,y_{N-1})+y_{N}S_{N-m-1}\left(y_{1},\ldots,y_{N-1}\right)\quad\text{if \ensuremath{N>m>0}}\\
1\quad\text{if \ensuremath{m=N}}
\end{cases}\label{eq:bnds-SNm-recurrence}
\end{equation}
is used for the computation of symmetric functions.

To prove an upper bound on the rounding error in computing $\sum_{k=0}^{n}w_{k,m}f(x_{k})$,
we begin with the following lemma.
\begin{lem}
If the recurrence (\ref{eq:bnds-SNm-recurrence}) is used to calculate
$S_{N-m}(y_{1},y_{2},\ldots,y_{N})$, the computed quantity may be
represented as 
\[
\hat{S}_{N-m}=\sum_{i_{1}<\cdots<i_{N-m}}y_{i_{1}}y_{i_{2}}\ldots y_{i_{N-m}}\left(1+\theta_{f(N,m)}\right)
\]
with $f(N,m)=2(N-1)-m$ for $0\leq m\leq N$.\label{lem:bnds-lem1}\end{lem}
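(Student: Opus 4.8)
The plan is to prove Lemma~\ref{lem:bnds-lem1} by induction on the number of variables $N$, using the recurrence (\ref{eq:bnds-SNm-recurrence}) as the inductive skeleton; in the method of partial products every partial symmetric function is built up by exactly this recurrence, so the induction is on recursion depth. The base cases are $m=N$ together with the small cases in which $f(N,m)\le 0$: there $\hat S_{N-m}$ is obtained without arithmetic ($\hat S_{0}=1$, or $\hat S_{1}(y_{1})=y_{1}$), so the claimed representation holds with the trivial factor $1+\theta_{0}$, using the convention that $\theta_{p}$ may be re-read as $\theta_{q}$ for any $q\ge p$. For the inductive step I would split on the two non-trivial branches of (\ref{eq:bnds-SNm-recurrence}).

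When $m=0$, the value is computed as $\hat S_{N}(y_{1},\ldots,y_{N})=\mathrm{fl}\bigl(y_{N}\,\hat S_{N-1}(y_{1},\ldots,y_{N-1})\bigr)$. The inductive hypothesis for the $(N-1,0)$ instance gives $\hat S_{N-1}(y_{1},\ldots,y_{N-1})=y_{1}\cdots y_{N-1}(1+\theta_{f(N-1,0)})$, and the single multiplication by $y_{N}$ raises the index by one, so $\hat S_{N}=y_{1}\cdots y_{N}(1+\theta_{f(N-1,0)+1})$ with $f(N-1,0)+1=2N-3\le 2N-2=f(N,0)$. When $0<m<N$, the value is computed as $\mathrm{fl}\bigl(\hat S_{N-m}(y_{1},\ldots,y_{N-1})+\mathrm{fl}(y_{N}\,\hat S_{N-m-1}(y_{1},\ldots,y_{N-1}))\bigr)$. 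In the notation of the lemma the first summand is the $(N-1,m-1)$ instance (order $N-m$ in $N-1$ variables has ``deficit'' $m-1$), hence carries $1+\theta_{f(N-1,m-1)}$ with $f(N-1,m-1)=2N-3-m$; the factor $\hat S_{N-m-1}(y_{1},\ldots,y_{N-1})$ is the $(N-1,m)$ instance, carrying $1+\theta_{f(N-1,m)}$ with $f(N-1,m)=2N-4-m$, and the multiplication by $y_{N}$ brings the second summand up to index $2N-3-m$ as well. Since the size-$(N-m)$ subsets of $\{1,\ldots,N\}$ split into those avoiding $N$ (accounted for by the first summand) and those containing $N$ (accounted for by the second), the two sums combine into $\sum_{i_{1}<\cdots<i_{N-m}}y_{i_{1}}\cdots y_{i_{N-m}}(1+\theta_{2N-3-m})$, and the outermost addition contributes one further $(1+\delta)$, giving index $2N-2-m=2(N-1)-m=f(N,m)$. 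This closes the induction.

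The one delicate point will be the index bookkeeping in the $0<m<N$ branch, namely arranging for the two summands to carry a \emph{common} $\theta$-index before they are merged. This works out exactly because the two terms on the right-hand side of the $N>m>0$ case of (\ref{eq:bnds-SNm-recurrence}) correspond to instances of the lemma with deficits differing by one --- $S_{N-m}(y_{1},\ldots,y_{N-1})$ has deficit $m-1$, while $S_{N-m-1}(y_{1},\ldots,y_{N-1})$ has deficit $m$ --- and this discrepancy of exactly one is cancelled by the single multiplication by $y_{N}$ applied to the second term. Everywhere else the argument uses only the floating-point model $\mathrm{fl}(a\,\mathrm{op}\,b)=(a\,\mathrm{op}\,b)(1+\delta)$ with $|\delta|\le u$, the composition rule $(1+\theta_{p})(1+\delta)=1+\theta_{p+1}$, and monotonicity in the subscript; no sign or distinctness information about the $y_{i}$ is needed, and since only upper bounds are claimed it is immaterial whether subcomputations of $\hat S_{N-m}$ and $\hat S_{N-m-1}$ happen to be shared.
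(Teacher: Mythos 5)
Your proof is correct and follows essentially the same route as the paper's: induction on $N$ through the recurrence (\ref{eq:bnds-SNm-recurrence}), with the key bookkeeping that the first summand is the deficit-$(m-1)$ instance in $N-1$ variables needing one further operation while the second is the deficit-$m$ instance needing two, both landing at $2(N-1)-m$. The paper merely compresses your term-by-term accounting into the single inequality $f(N,m)\leq\max\left(f(N-1,m-1)+1,\,f(N-1,m)+2\right)$ and checks that the closed form satisfies it.
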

\begin{proof}
One may easily verify that $f(1,0)=f(2,0)=f(1,1)=0$ and $f(2,0)=f(2,1)=1$
suffice. If we inductively assume the lemma for $S_{N-m}(y_{1},\ldots,y_{N-1})$
and $S_{N-m-1}(y_{1},\ldots,y_{N-1})$, and apply the floating point
axiom to the recurrence, we get
\[
f(N,m)\leq\max\left(f(N-1,m-1)+1,f(N-1,m)+2\right)
\]
for $N>2$, along with $f(N,N)=0$ and $f(N,0)=1+f(N-1,0)$. It may
be easily verified that $f(N,m)=2(N-1)-m$ satisfies these relations.
\end{proof}
Next we turn to the roundoff analysis of $w'_{k,m}$ computed using
(\ref{eq:bnds-wkm-convolution}). 
\begin{lem}
The computed value of $w'_{k,m}$ may be represented as 
\[
\hat{w}'_{k,m}=(-1)^{n-m}\sum_{i_{1}<\cdots<i_{n-m}}x_{i_{1}}x_{i_{2}}\ldots x_{i_{n-m}}(1+\theta_{2n+1})
\]
where the summation is over $i_{j}\in\left\{ 0,1,\ldots,n\right\} -\left\{ k\right\} $.\label{lem:bnds-lme2}\end{lem}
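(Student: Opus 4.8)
The plan is to push Lemma~\ref{lem:bnds-lem1} through the convolution (\ref{eq:bnds-wkm-convolution}), tracking how many rounding operations reach each monomial. Write $\ell_{m_1}$ for the coefficient of $x^{m_1}$ in $L_{k-1}=\prod_{j=0}^{k-1}(x-x_j)$ and $r_{m_2}$ for the coefficient of $x^{m_2}$ in $R_{k+1}=\prod_{j=k+1}^{n}(x-x_j)$, so that $\ell_{m_1}=(-1)^{k-m_1}S_{k-m_1}(x_0,\ldots,x_{k-1})$, $r_{m_2}=(-1)^{n-k-m_2}S_{n-k-m_2}(x_{k+1},\ldots,x_n)$, and $w'_{k,m}=\sum_{m_1+m_2=m}\ell_{m_1}r_{m_2}$. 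Forming $L_{k-1}$ by repeated multiplication by the factors $(x-x_j)$ is precisely the recurrence (\ref{eq:bnds-SNm-recurrence}) run on the $N=k$ variables $x_0,\ldots,x_{k-1}$ (the minus signs change no error magnitude at any step), and similarly $R_{k+1}$ corresponds to $N=n-k$ variables. So Lemma~\ref{lem:bnds-lem1} gives
\[
\hat\ell_{m_1}=(-1)^{k-m_1}\sum_{\substack{A\subseteq\{0,\ldots,k-1\}\\ |A|=k-m_1}}\Bigl(\prod_{i\in A}x_i\Bigr)\bigl(1+\theta_{2(k-1)-m_1}\bigr)
\]
with each squarefree monomial carrying its own local $\theta$, and an analogous expansion of $\hat r_{m_2}$ over the $(n-k-m_2)$-element subsets of $\{k+1,\ldots,n\}$ with exponent $2(n-k-1)-m_2$. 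The combinatorial observation that makes this work is that the sum (\ref{eq:bnds-wkm-convolution}) ranges over the pairs $(m_1,m_2)$ with $m_1+m_2=m$, hence over at most $m+1\le n+1$ index pairs --- not over the $\binom{n}{n-m}$ individual monomials --- so accumulating it costs at most $m$ additions.

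I would then multiply a monomial of $\hat\ell_{m_1}$ by one of $\hat r_{m_2}$, incurring one rounding for the product. Each resulting term is a squarefree monomial $\prod_{i\in A\cup B}x_i$ of degree $(k-m_1)+(n-k-m_2)=n-m$ over indices other than $k$, multiplied by
\[
(1+\theta_{2(k-1)-m_1})(1+\theta_{2(n-k-1)-m_2})(1+\delta)=1+\theta_{q},\qquad q=2n-3-m,
\]
the exponent $q=[2(k-1)-m_1]+[2(n-k-1)-m_2]+1$ being independent of how $m$ is split, since $m_1+m_2=m$ always. Accumulating the at most $m+1$ terms of the convolution then attaches one further $1+\theta_m$ to each. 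The identity reconciling (\ref{eq:bnds-wkm-convolution}) with (\ref{eq:bnds-wkm-atzero}) says exactly that each squarefree degree-$(n-m)$ monomial $x_{i_1}\cdots x_{i_{n-m}}$ with $i_1<\cdots<i_{n-m}$ drawn from $\{0,1,\ldots,n\}\setminus\{k\}$ arises exactly once in the double sum --- from $A=\{i_\ell:i_\ell<k\}$, $B=\{i_\ell:i_\ell>k\}$, $m_1=k-|A|$, $m_2=(n-k)-|B|$ --- while the signs collapse to the uniform value $(-1)^{k-m_1}(-1)^{n-k-m_2}=(-1)^{n-m}$. Putting these together and using $(2n-3-m)+m=2n-3$ yields
\[
\hat{w}'_{k,m}=(-1)^{n-m}\sum_{i_1<\cdots<i_{n-m}}x_{i_1}x_{i_2}\cdots x_{i_{n-m}}\,\bigl(1+\theta_{2n-3}\bigr),
\]
and since a $\theta_{2n-3}$ is a fortiori a $\theta_{2n+1}$, this is the claimed representation.

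The step I expect to be the main obstacle is the accounting itself --- seeing clearly that the convolution runs over only $O(m)$ index pairs, so its additions contribute merely $\theta_m$ rather than something growing like $\binom{n}{n-m}$, and that $L_{k-1}$ and $R_{k+1}$ rest on $k$ and $n-k$ grid points whose counts sum to $n$, so the two applications of Lemma~\ref{lem:bnds-lem1} contribute exponents summing to $2n-O(1)$; the fact that the per-term exponent $2n-3-m$ shrinks by exactly the number of extra additions is what makes the final bound independent of $m$. The boundary values $k=0$ and $k=n$, where one of $L_{k-1},R_{k+1}$ is the empty product (stored exactly) and the computation reduces to a single recurrence, should be checked separately but clearly remain within the $2n+1$ bound, and the slack between the $2n-3$ obtained here and the stated $2n+1$ is immaterial. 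The rest is routine $\theta$-calculus of the type set up at the start of the section.
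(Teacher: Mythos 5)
Your proof is correct and follows essentially the same route as the paper: apply Lemma~\ref{lem:bnds-lem1} to the two symmetric-function factors, charge one rounding for the product and at most $m$ for accumulating the at most $m+1$ convolution terms, and observe that the exponents combine to $2(k-1)-m_1+2(n-k-1)-m_2+(m+1)=2n-3\le 2n+1$ independently of the split of $m$. Your write-up is merely more explicit about the sign bookkeeping, the bijection between index pairs and $(n-m)$-subsets, and the boundary cases $k=0,n$, all of which the paper leaves implicit.
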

\begin{proof}
The number of terms in the summation in (\ref{eq:bnds-wkm-convolution})
is at most $m+1$ and each term is formed using a single multiplication.
Therefore we may represent the computed quantity as 
\[
(-1)^{n-m}\sum_{m_{1},m_{2}}\hat{S}_{k-m_{1}}\left(x_{0},\ldots,x_{k-1}\right)\hat{S}_{n-k-m_{2}}\left(x_{k+1},\ldots,x_{n}\right)(1+\theta_{m+1}).
\]
Applying Lemma (\ref{lem:bnds-lem1}) to $\hat{S}_{k-m}$ (with $N=k$
and $m=m_{1}$)and $\hat{S}_{n-k-m_{2}}$(with $N=n-k$ and $m=m_{2}$),
we get a representation of $\hat{w}'_{k,m}$ that completes the proof.
\end{proof}
The following lemma occurs as a part of Higham's rounding error analysis
of the barycentric formula \cite{Higham2004}. 
\begin{lem}
The computed Lagrange weight $\hat{w}_{k}$ is given by 
\[
\hat{w}_{k}=w_{k}(1+\theta_{2n})
\]
where $w_{k}$ is the exact Lagrange weight.\end{lem}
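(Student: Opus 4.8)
The plan is to prove the bound by directly counting the floating-point operations that enter the evaluation of
$w_{k}=1/\prod_{j=0,\,j\neq k}^{n}(x_{k}-x_{j})$ and collecting all the associated relative-error factors into a single $\theta$ term using the accounting conventions set up above. Note first that the product defining $w_{k}$ has exactly $n$ factors, one for each $j\in\{0,1,\ldots,n\}\setminus\{k\}$.

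First I would record the error incurred in forming the node differences: each $x_{k}-x_{j}$ is computed with a single subtraction, so by the floating-point axiom the computed difference is $(x_{k}-x_{j})(1+\delta_{j})$ with $|\delta_{j}|\leq u$, contributing $n$ error factors altogether. Next, the $n$ computed differences are multiplied together; this uses $n-1$ multiplications irrespective of the order in which they are associated, each multiplication contributing one more factor of the form $(1+\delta)^{\pm1}$. Hence the computed product of differences equals $\bigl(\prod_{j\neq k}(x_{k}-x_{j})\bigr)(1+\theta_{2n-1})$, the $2n-1$ coming from $n$ subtractions and $n-1$ multiplications. Finally the reciprocal is taken with one division, which multiplies by a further factor $(1+\delta)^{-1}$, and since the reciprocal of $1+\theta_{2n-1}$ is again a product of $2n-1$ factors of the form $(1+\delta)^{\pm1}$, we obtain $\hat{w}_{k}=w_{k}(1+\theta_{2n})$ as claimed.

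There is essentially no obstacle here; the only points requiring care are to include the $n$ subtractions $x_{k}-x_{j}$ in the operation count (omitting them would yield only the weaker-looking $\theta_{2n-1}$), and to note that the multiplication count $n-1$ for the $n$-fold product is independent of the bracketing used, so that the bound $|\theta_{2n}|\leq\gamma_{2n}$ holds uniformly regardless of the evaluation order. Since this is precisely the content of the corresponding step in Higham's rounding-error analysis of the barycentric formula \cite{Higham2004}, one may alternatively simply invoke that reference.
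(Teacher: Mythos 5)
Your proof is correct and follows exactly the paper's argument: the paper's own proof is the one-line operation count of $n$ subtractions, $n-1$ multiplications, and one division, totalling $2n$ rounding factors absorbed into $\theta_{2n}$. Your additional remarks (that the $\pm1$ exponents in the definition of $\theta_n$ absorb the reciprocal, and that the multiplication count is independent of bracketing) are accurate elaborations of the same argument.
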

\begin{proof}
The exact Lagrange weight is given by 
\[
w_{k}=\frac{1}{\prod_{j\neq k}(x_{k}-x_{j})}.
\]
The $\theta_{2n}$ in the lemma is a result of $n$ subtractions,
$n-1$ multiplications, and a single division.\end{proof}
\begin{lem}
The computed weight $w_{k,m}$ may be represented as 
\[
\hat{w}_{k,m}=(-1)^{n-m}m!w_{k}\,\sum_{i_{1}<\cdots<i_{n-m}}x_{i_{1}}x_{i_{2}}\ldots x_{i_{n-m}}(1+\theta_{4n+3})
\]
where the summation is over $i_{j}\in\left\{ 0,1,\ldots,n\right\} -\left\{ k\right\} $.\end{lem}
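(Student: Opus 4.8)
The plan is to obtain this as an immediate corollary of the three preceding lemmas, together with a short accounting of the two floating point multiplications that remain. Recall that the method of partial products forms $w_{k,m}$ as $m!\,w_{k}\,w'_{k,m}$, so the computed quantity $\hat{w}_{k,m}$ is produced from the already-computed quantities $\hat{w}_{k}$ and $\hat{w}'_{k,m}$ by first multiplying them together and then multiplying by the integer $m!$; by the floating point axiom each of these two operations contributes a factor $(1+\delta)$ with $\abs{\delta}\leq u$.

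First I would substitute the representation $\hat{w}'_{k,m}=(-1)^{n-m}\sum_{i_{1}<\cdots<i_{n-m}}x_{i_{1}}\cdots x_{i_{n-m}}(1+\theta_{2n+1})$ from Lemma \ref{lem:bnds-lme2} and the representation $\hat{w}_{k}=w_{k}(1+\theta_{2n})$ from the Lagrange weight lemma above. Multiplying these, then by $m!$, and then folding in the two rounding factors, every term of the sum picks up a perturbation of the form $(1+\theta_{2n+1})(1+\theta_{2n})(1+\delta_{1})(1+\delta_{2})$, which by the conventions introduced above may be rewritten as a single $(1+\theta_{4n+3})$, since $(2n+1)+(2n)+1+1=4n+3$. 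Here each instance of $\theta_{4n+3}$ attached to a term of the summation is its own local variable, and the order in which the two multiplications are performed is immaterial for the same reason. Pulling the scalars $(-1)^{n-m}$, $m!$ and $w_{k}$ out of the summation then gives exactly the stated expression.

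The only point requiring care is the bookkeeping of the exponent, and I do not expect a genuine obstacle here: the $\theta_{2n+1}$ in Lemma \ref{lem:bnds-lme2} already records the full error of the partial-product computation of $w'_{k,m}$, the $\theta_{2n}$ records the full error of the Lagrange weight, and exactly two further multiplications are performed, so $4n+3=(2n+1)+(2n)+2$ is the honest count. If one insisted that $m!$ is exactly representable in floating point, so that multiplication by it is error-free, the exponent could be lowered to $4n+2$; retaining $4n+3$ costs nothing and keeps the statement uniform in $m$. I expect the whole argument to take only a few lines, the sole discipline being to treat every $\theta$ as local and to count the two remaining operations correctly.
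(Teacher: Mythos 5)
Your proposal is correct and follows exactly the paper's argument: combine the $\theta_{2n+1}$ from the representation of $\hat{w}'_{k,m}$ with the $\theta_{2n}$ from the Lagrange weight, then increment by $2$ for the two remaining multiplications by $m!$ and $\hat{w}_{k}$, giving $\theta_{4n+3}$. The paper's own proof is the same two-line accounting, so there is nothing further to add.
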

\begin{proof}
The finite-difference weight $w_{k,m}$ is computed as $m!w_{k}w_{k,m}$.
This lemma is proved using the previous two lemmas and incrementing
the subscript of $\theta$ by $2$ to account for multiplication by
$m!$ and $w_{k}$.\end{proof}
\begin{lem}
If the derivative is being approximated at $x=\zeta$, the computed
weight $w_{k,m}$ may be represented as 
\[
\hat{w}_{k,m}=(-1)^{n-m}m!w_{k}\,\sum_{i_{1}<\cdots<i_{n-m}}\left(x_{i_{1}}-\zeta\right)\left(x_{i_{2}}-\zeta\right)\ldots\left(x_{i_{n-m}}-\zeta\right)(1+\theta_{5n-m+3})
\]
where the summation is over $i_{j}\in\left\{ 0,1,\ldots,n\right\} -\left\{ k\right\} $.\end{lem}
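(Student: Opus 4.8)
The plan is to re-run the analysis of the preceding lemma on the shifted grid. By (\ref{eq:bnds-wkm-full}), approximating $f^{(m)}$ at $x=\zeta$ amounts to differentiating $\prod_{j\neq k}(x-x_{j})$ exactly $m$ times and evaluating at $x=\zeta$; the substitution $x=\zeta+t$ turns this into the method-of-partial-products computation (\ref{eq:bnds-wkm-convolution}) applied to the shifted grid points $x_{j}-\zeta$ and evaluated at $t=0$, exactly as in (\ref{eq:bnds-wkm-atzero}). Thus the only new feature compared with the previous lemma is that the algorithm first forms the shifts $\hat{z}_{j}=\text{fl}(x_{j}-\zeta)=(x_{j}-\zeta)(1+\delta_{j})$, $|\delta_{j}|\leq u$ (one subtraction per grid point $j\neq k$, with $\zeta$ taken to be a floating point number), and then proceeds exactly as before.

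First I would note that the Lagrange weight $w_{k}=1/\prod_{j\neq k}(x_{k}-x_{j})$ is shift-invariant, so it is computed from the original grid and the earlier lemma still gives $\hat{w}_{k}=w_{k}(1+\theta_{2n})$. Next, the proof of Lemma \ref{lem:bnds-lme2} applies verbatim with $x_{j}$ replaced by $\hat{z}_{j}$ throughout, giving
\[
\hat{w}'_{k,m}=(-1)^{n-m}\sum_{i_{1}<\cdots<i_{n-m}}\hat{z}_{i_{1}}\hat{z}_{i_{2}}\ldots\hat{z}_{i_{n-m}}\,(1+\theta_{2n+1}),
\]
the summation running over $i_{j}\in\{0,1,\ldots,n\}-\{k\}$. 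Substituting $\hat{z}_{i_{\ell}}=(x_{i_{\ell}}-\zeta)(1+\delta_{i_{\ell}})$ into each product of $n-m$ distinct factors, the $n-m$ factors $(1+\delta_{i_{\ell}})$ multiply to $1+\theta_{n-m}$, and combining with the pre-existing $1+\theta_{2n+1}$ yields
\[
\hat{w}'_{k,m}=(-1)^{n-m}\sum_{i_{1}<\cdots<i_{n-m}}(x_{i_{1}}-\zeta)(x_{i_{2}}-\zeta)\ldots(x_{i_{n-m}}-\zeta)\,(1+\theta_{3n-m+1}).
\]

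Finally, $\hat{w}_{k,m}=\text{fl}(m!\,\hat{w}_{k}\,\hat{w}'_{k,m})$ introduces one more factor $1+\theta_{2}$ from the two multiplications, so multiplying the three error factors $(1+\theta_{2n})(1+\theta_{3n-m+1})(1+\theta_{2})=1+\theta_{5n-m+3}$ gives the stated representation; note this is exactly the subscript $4n+3$ of the previous lemma increased by the $n-m$ new rounding errors, one per factor in each product term. The only point that needs care is the bookkeeping: the subtraction $x_{j}-\zeta$ is done once, not once per product term, yet the same $\hat{z}_{j}$ recurs in many product terms, so one must invoke the convention that each $\theta$ is local in order to absorb the shared errors $(1+\delta_{i_{1}})\cdots(1+\delta_{i_{n-m}})$ independently into the $\theta$ of each term. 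Beyond this accounting there is no real obstacle; the lemma is a direct corollary of the previous one.
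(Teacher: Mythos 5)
Your proposal is correct and follows essentially the same route as the paper: shift the grid by $-\zeta$, charge one extra rounding error per factor in each product (hence $n-m$ new $\theta$'s per term), and observe that $w_{k}$ is shift-invariant and computed from the unshifted grid so its $\theta_{2n}$ analysis is unchanged. Your version simply spells out the bookkeeping ($2n+(2n+1)+2+(n-m)=5n-m+3$) that the paper's proof compresses into ``increment the subscript by $n-m$,'' and your remark about locality of $\theta$ absorbing the shared subtraction errors is exactly the convention the paper relies on.
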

\begin{proof}
The finite difference weights are computed at $x=\zeta$ by shifting
the grid by $-\zeta$ and then using the algorithm for $x=0$. Thus
compared to the previous lemma, the subscript of $\theta$ is incremented
by $n-m$ to allow for $n-m$ subtractions inside the summation. There
is no need to redo the analysis of $w_{k}$ because $w_{k}$ is unchanged
by the shift and it is assumed that $w_{k}$ is computed prior to
shifting.
\end{proof}
The theorem below introduces $U_{\mathcal{R}}$ which is an upper
bound of the rounding error.
\begin{thm}
The magnitude of the roundoff error in the computation of the finite
difference approximation
\[
\sum_{k=0}^{n}w_{k,m}f(x_{k})
\]
to $f^{(m)}(\zeta)$ is upper bounded by 
\begin{equation}
U_{\mathcal{R}}=\gamma_{6n-m+4}|f|\sum_{k=0}^{n}m!\,|w_{k}|\, S_{n-m}\left(\left\{ |x_{0}-\zeta|,\ldots,|x_{n}-\zeta|\right\} -\left\{ |x_{k}-\zeta\right\} \right),\label{eq:bnds-UR}
\end{equation}
where $|f|$ is equal to $\max_{j}|f(x_{j})|$.\label{thm:bnds-UR}\end{thm}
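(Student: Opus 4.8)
The plan is to combine the representation of the computed finite-difference weights furnished by the preceding lemma with a routine accounting of the only two further floating-point operations that enter the computation of $\sum_{k=0}^{n}w_{k,m}f(x_{k})$ — the products $\hat{w}_{k,m}f(x_{k})$ and the summation over $k$ — and then to pass to absolute values.

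First I would write the computed approximation as
\[
\widehat{\sum_{k=0}^{n}w_{k,m}f(x_{k})}=\sum_{k=0}^{n}\hat{w}_{k,m}\,f(x_{k})\,(1+\theta_{n+1}),
\]
where the factor $(1+\theta_{n+1})$ collects the single rounding error produced by each product $\hat{w}_{k,m}f(x_{k})$ together with the at most $n$ rounding errors incurred in adding the $n+1$ products left to right; here one uses the paper's convention that every summand may be tagged with the same local $\theta_{n}$ (rather than factoring a common factor out), and that $\theta_{n}$ and the $\theta_{1}$ from the multiplication merge into $\theta_{n+1}$. Substituting the representation of $\hat{w}_{k,m}$ from the previous lemma, which carries $(1+\theta_{5n-m+3})$, and merging the two $\theta$'s gives a factor $(1+\theta_{6n-m+4})$ on each term, so that
\[
\widehat{\sum_{k=0}^{n}w_{k,m}f(x_{k})}=\sum_{k=0}^{n}(-1)^{n-m}m!\,w_{k}\!\!\sum_{i_{1}<\cdots<i_{n-m}}\!\!(x_{i_{1}}-\zeta)\cdots(x_{i_{n-m}}-\zeta)\,f(x_{k})\,(1+\theta_{6n-m+4}).
\]
Next I would identify the $\theta$-free part of this expression with the exact finite-difference sum: by (\ref{eq:bnds-wkm-atzero}) applied on the grid shifted by $-\zeta$, one has $(-1)^{n-m}m!\,w_{k}\sum_{i_{1}<\cdots<i_{n-m}}(x_{i_{1}}-\zeta)\cdots(x_{i_{n-m}}-\zeta)=w_{k,m}$, so the $\theta$-free part is exactly $\sum_{k}w_{k,m}f(x_{k})$. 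Subtracting, the roundoff error equals $\sum_{k}(-1)^{n-m}m!\,w_{k}\sum_{i_{1}<\cdots<i_{n-m}}(x_{i_{1}}-\zeta)\cdots(x_{i_{n-m}}-\zeta)\,f(x_{k})\,\theta_{6n-m+4}$. Taking absolute values, bounding $|\theta_{6n-m+4}|\le\gamma_{6n-m+4}$ and $|f(x_{k})|\le|f|$, and replacing $|(x_{i_{1}}-\zeta)\cdots(x_{i_{n-m}}-\zeta)|$ by $|x_{i_{1}}-\zeta|\cdots|x_{i_{n-m}}-\zeta|$ turns the inner sum into the elementary symmetric function $S_{n-m}(\{|x_{0}-\zeta|,\ldots,|x_{n}-\zeta|\}-\{|x_{k}-\zeta|\})$, which yields exactly $U_{\mathcal{R}}$ as defined in (\ref{eq:bnds-UR}).

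I do not expect a genuine obstacle here; the substance is in the bookkeeping of the $\theta$-subscripts and in being precise about what "roundoff error" means. The one point that needs care is the claim that the summation together with the final multiplication contributes only $\theta_{n+1}$: this relies on the convention that distinct summands may each be assigned the maximal local $\theta_{n}$, and on treating each $f(x_{k})$ as an exact floating-point input so that $\hat{w}_{k,m}f(x_{k})$ incurs a single rounding error. A secondary point is to make sure the error is measured against the \emph{exact} evaluation $\sum_{k}w_{k,m}f(x_{k})$ of the finite-difference formula with exact weights — which is precisely what matching the $\theta$-free term via (\ref{eq:bnds-wkm-atzero}) accomplishes after the shift by $-\zeta$.
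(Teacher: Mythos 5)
Your proposal matches the paper's proof essentially step for step: apply the preceding lemma's representation of $\hat{w}_{k,m}$ with its $(1+\theta_{5n-m+3})$ factor, absorb the $n+1$ multiplications and additions of the final sum into $(1+\theta_{n+1})$ to obtain $(1+\theta_{6n-m+4})$, subtract the exact value, and bound via $|\theta_{6n-m+4}|\leq\gamma_{6n-m+4}$. Your added care in identifying the $\theta$-free part with the exact weighted sum via (\ref{eq:bnds-wkm-atzero}) on the shifted grid is a correct and slightly more explicit rendering of what the paper leaves implicit.
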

\begin{proof}
For the computed value of $w_{k,m}$, we may use the previous lemma.
In forming the sum $\sum_{k=0}^{n}w_{k,m}f(x_{k})$, a total of $n+1$
terms are added and each term is formed through a single multiplication.
Therefore the computed value of $\sum_{k}w_{k,m}f(x_{k})$ is 
\[
\sum_{k=0}^{m}f(z_{k})(-1)^{n-m}m!w_{k}\,\sum_{i_{1}<\cdots<i_{n-m}}\left(x_{i_{1}}-\zeta\right)\left(x_{i_{2}}-\zeta\right)\ldots\left(x_{i_{n-m}}-\zeta\right)(1+\theta_{6n-m+4}).
\]
Here $(1+\theta_{6n-m+4})$ is obtained from $(1+\theta_{5n-m+3})(1+\theta_{n+1})$.
The upper bound is obtained by subtracting the true value of $\sum_{k}w_{k}f(x_{k})$,
taking absolute values, and using $|\theta_{6n-m+4}|\leq\gamma_{6n-m+4}$.
\end{proof}
If the weights $w_{k,m}$ are exact, except for the inevitable roundoff
in floating point representation, the computed value of $\sum_{k=0}^{n}w_{k,m}f(x_{k})$
is 
\[
\sum_{k=0}^{n}w_{k,m}f(x_{k})(1+\theta_{k+2})
\]
assuming right to left summation. Thus the magnitude of the rounding
error is bounded by 
\begin{equation}
U'_{\mathcal{R}}=|f|\sum_{k=0}^{n}|w_{k,m}|\,\gamma_{k+2}.\label{eq:bnds-URprime}
\end{equation}
For other orders of summation the $\gamma_{k+2}$ may be reordered.

\subsection{Asymptotics}

To obtain asymptotics for $U_{\mathcal{R}}$ and $U'_{\mathcal{R}}$
in the limit of increasing $n$, we introduce three quantities $\mathcal{W}_{\ell}$,
$\mathcal{E}_{m}^{\ell}$, and $\mathcal{E}_{m}^{\ell,k}$. The first
of these $\mathcal{W}_{\ell}$ is defined as $\prod_{j=0,j\neq\ell}^{n}(x_{\ell}-x_{j})$.
It is the inverse of the Lagrange weight. If $x_{0},x_{1},\ldots,x_{n}$
are the Chebyshev points, it is well-known (see \cite{DonSolomonoff1997}
for example) that 
\begin{equation}
\mathcal{W}_{\ell}=\begin{cases}
(-1)^{\ell}\frac{2n}{2^{n-1}} & \quad\text{for \ensuremath{\ell=0,n}}\\
(-1)^{\ell}\frac{n}{2^{n-1}} & \quad\text{otherwise.}
\end{cases}\label{eq:bnds-Wl}
\end{equation}
Define 
\begin{equation}
\mathcal{E}_{m}^{\ell}=\sum_{i_{1}<\cdots<i_{m}}\frac{1}{\left(x_{\ell}-x_{i_{1}}\right)\left(x_{\ell}-x_{i_{2}}\right)\cdots\left(x_{\ell}-x_{i_{m}}\right)}\label{eq:bnds-El}
\end{equation}
where $i_{j}\in\left\{ 0,1,\ldots,n\right\} -\left\{ \ell\right\} $.
If $\ell\neq k$, define 
\begin{equation}
\mathcal{E}_{m}^{\ell,k}=\sum_{i_{1}<\cdots<i_{m}}\frac{1}{\left(x_{\ell}-x_{i_{1}}\right)\left(x_{\ell}-x_{i_{2}}\right)\cdots\left(x_{\ell}-x_{i_{m}}\right)}\label{eq:bnds-Elk}
\end{equation}
where $i_{j}\in\left\{ 0,1,\ldots,n\right\} -\left\{ k,\ell\right\} $.
If $m=0$, both $\mathcal{E}_{m}^{\ell}$ and $\mathcal{E}_{m}^{\ell,k}$
are defined to be $1$.

Define 
\[
\mathcal{P}_{r}^{\ell}=\sum_{j=0,j\neq\ell}^{n}\frac{1}{(x_{\ell}-x_{j})^{r}}\quad\text{and}\quad\mathcal{P}_{r}^{\ell,k}=\mathcal{P}_{r}^{\ell}-\frac{1}{(x_{\ell}-x_{k})^{r}}.
\]
For the Chebyshev points, or indeed for any set of points, the rounding
errors are maximized at the edges as evident from inspection of (\ref{eq:bnds-UR}).
Later we will see that discretization errors too tend to be the greatest
at the edges. Therefore we set $\ell=0$, and find that 
\[
\frac{1}{x_{\ell}-x_{j}}=\frac{1}{1-x_{j}}\sim\frac{2n^{2}}{j^{2}\pi^{2}}.
\]
It follows that 
\begin{equation}
\mathcal{P}_{r}^{0}\sim\frac{2^{r}\zeta(2r)}{\pi^{2r}}n^{2r},\label{eq:bnds-Pr0-asym}
\end{equation}
and 
\begin{equation}
\mathcal{P}_{r}^{0,k}\sim\frac{2^{r}}{\pi^{2r}}n^{2r}\left(\zeta(2r)-\frac{1}{k^{2r}}\right),\label{eq:bnds-Prk-asym}
\end{equation}
where $\zeta(\cdot)$ is the zeta function. 

The Newton identities relating symmetric functions give 
\begin{align}
\mathcal{E}_{1}^{0} & =\mathcal{P}_{1}^{0}\nonumber \\
\mathcal{E}_{2}^{0} & =\mathcal{E}_{1}^{0}\mathcal{P}_{1}^{0}-\mathcal{P}_{2}^{0}\nonumber \\
\mathcal{E}_{3}^{0} & =\mathcal{E}_{2}^{0}\mathcal{P}_{1}^{0}-\mathcal{E}_{1}^{0}\mathcal{P}_{2}^{0}+\mathcal{P}_{3}^{0}.\label{eq:bnds-newton-ids}
\end{align}
Similar identities related $\mathcal{E}_{r}^{0,k}$ and $P_{r}^{0,k}$.
\begin{thm}
If $x_{0},x_{1},\ldots,x_{n}$ are the Chebyshev points, the upper
bound $U_{\mathcal{R}}$ for the rounding error with $\zeta=x_{0}=1$
has the following asymptotics in the limit of increasing $n$:
\begin{align*}
U_{\mathcal{R}} & \sim\gamma_{6n+3}|f|\left(\frac{n^{2}}{3}+\sum_{k=1}^{n}\frac{4n^{2}}{\pi^{2}}\right)=\gamma_{6n+3}|f|0.9995\ldots n^{2}\\
 & \sim\gamma_{6n+2}|f|\left(\frac{n^{4}}{30}+\sum_{k=1}^{n}\frac{4n^{4}}{\pi^{2}k^{2}}\left(\frac{1}{3}-\frac{2}{\pi^{2}k^{2}}\right)\right)=\gamma_{6n+2}|f|0.1665\ldots n^{4}\\
 & \sim\gamma_{6n+1}|f|\left(\frac{n^{6}}{630}+\sum_{k=1}^{n}\frac{2n^{6}}{15\pi^{6}k^{6}}\bigl|\pi^{4}k^{4}-20\pi^{2}k^{2}+120\bigr|\right)=\gamma_{6n+1}|f|0.01109\ldots n^{6}\\
 & \sim\gamma_{6n}|f|\left(\frac{n^{8}}{22680}+\sum_{k=1}^{n}\frac{2n^{8}}{315\pi^{8}k^{8}}\bigl|\pi^{6}k^{6}-42\pi^{4}k^{4}+840\pi^{2}k^{2}-5040\bigr|\right)=\gamma_{6n}|f|0.00039\ldots n^{8},
\end{align*}
for order of differentiation $m=1,2,3,4$, respectively. As before,
$|f|=\max_{j}f(x_{j})$.\label{thm:bnds-UR-asym}\end{thm}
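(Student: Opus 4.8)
The plan is to begin from the exact formula for $U_{\mathcal{R}}$ in Theorem~\ref{thm:bnds-UR}, specialize it to $\zeta=x_{0}=1$ on the Chebyshev grid, collapse the inner sum into the quantities $\mathcal{E}_{m}^{0}$, $\mathcal{E}_{m-1}^{0,k}$ and $1/(1-x_{k})$ introduced just before the statement, and then feed the asymptotics (\ref{eq:bnds-Pr0-asym})--(\ref{eq:bnds-Prk-asym}) into the Newton identities (\ref{eq:bnds-newton-ids}). Nothing is needed for the $\gamma$ prefactor: $6n-m+4$ is $6n+3,6n+2,6n+1,6n$ for $m=1,2,3,4$.

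The first step is an \emph{exact} rewriting of $\sum_{k=0}^{n}m!\,|w_{k}|\,S_{n-m}\bigl(\{|x_{0}-\zeta|,\dots,|x_{n}-\zeta|\}-\{|x_{k}-\zeta|\}\bigr)$ when $\zeta=x_{0}=1$. Then $|x_{j}-\zeta|=1-x_{j}\ge0$ and $1-x_{0}=0$, so for $k\ge1$ the set passed to $S_{n-m}$ still contains the entry $0$; any $(n-m)$-fold product of its entries that uses that $0$ vanishes, so $S_{n-m}$ reduces to the elementary symmetric function of order $n-m$ of the $n-1$ nonzero numbers $\{1-x_{j}:j\neq0,k\}$. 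Factoring out the product of those numbers --- which by the definition of $\mathcal{W}_{\ell}$ equals $\mathcal{W}_{0}/(1-x_{k})$ --- exhibits the $k$-th term as a fixed constant times $(1-x_{k})^{-1}\mathcal{E}_{m-1}^{0,k}$, the constant $m!\,|w_{k}|\mathcal{W}_{0}$ collapsing by (\ref{eq:bnds-Wl}) to $2\,m!$ for $0<k<n$ and $m!$ for $k=n$ (the exponential factor $2^{\,n-1}$ and the powers of $n$ cancel). The $k=0$ term is handled identically and equals $m!\,\mathcal{E}_{m}^{0}$. Hence $U_{\mathcal{R}}/(\gamma_{6n-m+4}|f|)$ equals exactly
\[
m!\,\mathcal{E}_{m}^{0}+\sum_{k=1}^{n-1}\frac{2\,m!}{1-x_{k}}\,\mathcal{E}_{m-1}^{0,k}+\frac{m!}{1-x_{n}}\,\mathcal{E}_{m-1}^{0,n}.
\]

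The second step substitutes the asymptotics. Writing $\mathcal{E}_{m}^{0}$ and $\mathcal{E}_{m-1}^{0,k}$ as polynomials in the power sums by (\ref{eq:bnds-newton-ids}) and its $(0,k)$-analogue, and then invoking (\ref{eq:bnds-Pr0-asym}), (\ref{eq:bnds-Prk-asym}) and $1-x_{k}\sim k^{2}\pi^{2}/(2n^{2})$: the factor $\mathcal{E}_{m}^{0}$ is a polynomial in the even zeta values times $n^{2m}$ (equal to $n^{2}/3$, $n^{4}/30$, $n^{6}/630$, $n^{8}/22680$ for $m=1,2,3,4$), so the diagonal term supplies the leading $n^{2m}$ contribution; and the generic off-diagonal term becomes $\dfrac{2\,m!}{1-x_{k}}\mathcal{E}_{m-1}^{0,k}\sim\dfrac{4\,m!}{\pi^{2}k^{2}}\,n^{2}\cdot\bigl(\text{a polynomial of degree }m-1\text{ in }n^{2}\text{ and }1/k^{2}\bigr)$, which is exactly the $k^{-2}\times(\text{polynomial in }1/k^{2})$ shape of the displayed sums once one uses $\mathcal{P}_{r}^{0,k}\sim(2^{r}/\pi^{2r})n^{2r}(\zeta(2r)-k^{-2r})$. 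Summing on $k$ then reduces to $\sum_{k\ge1}k^{-2r}=\zeta(2r)$ for $r=1,2,3,4$ (that is $\pi^{2}/6,\pi^{4}/90,\pi^{6}/945,\pi^{8}/9450$), and collecting terms completes the derivation.

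The step I expect to be the real obstacle is making the termwise passage to the limit legitimate: every summand in the exact identity is $\Theta(n^{2m})$ and their number grows with $n$, so $\lim_{n\to\infty}$ must be interchanged with $\sum_{k}$. I would handle this by (i) the uniform bounds $(1-x_{k})^{-1}\le Cn^{2}/k^{2}$ and $\mathcal{E}_{m-1}^{0,k}\le C'n^{2(m-1)}$ (the latter from $\mathcal{E}_{m-1}^{0,k}\le(\mathcal{P}_{1}^{0})^{m-1}/(m-1)!$), so that the $k$-th summand is dominated by $C''n^{2m}/k^{2}$ with $\sum_{k}k^{-2}<\infty$; and (ii) observing that for $k$ of order $n$, where the small-angle estimate for $1-x_{k}$ fails, one has $1-x_{k}=\Theta(1)$, so those contributions --- including the corner-to-corner term $k=n$ --- are $O(n^{2m-2})$ and drop from the leading asymptotics. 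The remainder is the bookkeeping above, carried out separately for $m=1,2,3,4$; the only $m$-dependence is the degree of the Newton polynomial and the arithmetic of the zeta values.
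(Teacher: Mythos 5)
Your proposal follows the paper's proof essentially step for step: specialize (\ref{eq:bnds-UR}) to $\zeta=x_{0}=1$, rewrite the $k=0$ and $k>0$ terms as $\mathcal{E}_{m}^{0}$ and $(|\mathcal{W}_{0}|/|\mathcal{W}_{k}|)\,\mathcal{E}_{m-1}^{0,k}/(1-x_{k})$, and substitute the power-sum asymptotics (\ref{eq:bnds-Pr0-asym})--(\ref{eq:bnds-Prk-asym}) through the Newton identities (\ref{eq:bnds-newton-ids}); your justification of the term-by-term passage to the limit via a summable dominating sequence is a welcome extra that the paper omits. The only discrepancy is bookkeeping: your exact identity retains the factor $m!$ from (\ref{eq:bnds-UR}), whereas the paper's intermediate expression (\ref{eq:bnds-tmp1}) and the displayed constants (e.g.\ $n^{4}/30$ rather than $m!\,\mathcal{E}_{2}^{0}=n^{4}/15$ for $m=2$) silently drop it, so carrying your formula to the end would give constants $m!$ times larger for $m\geq2$ --- an inconsistency that originates in the paper rather than in your argument.
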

\begin{proof}
If we go back to (\ref{eq:bnds-UR}), which defines $U_{\mathcal{R}}$,
and look at the $k=0$ term with $\zeta=x_{0}=1$, it can be written
as 
\begin{align*}
w_{0}S_{n-m}(1-x_{1},1-x_{2},\ldots,1-x_{n}) & =\frac{S_{n-m}(1-x_{1},1-x_{2},\ldots,1-x_{n})}{\prod_{j=1}^{n}(1-x_{j})}\\
 & =\mathcal{E}_{m}^{0}.
\end{align*}
A term with $k>0$ may be written as 
\begin{align*}
|w_{k}|S_{n-m}\left(\left\{ 0,1-x_{1},\ldots,1-x_{n}\right\} -\left\{ 1-x_{k}\right\} \right) & =\frac{S_{n-m}\left(\left\{ 1-x_{1},\ldots,1-x_{n}\right\} -\left\{ 1-x_{k}\right\} \right)}{\prod_{j=0,j\neq k}^{j=n}|x_{k}-x_{j}|}\\
 & =\frac{|\mathcal{W}_{0}|}{|\mathcal{W}_{k}|}\frac{S_{n-m}\left(\left\{ 1-x_{1},\ldots,1-x_{n}\right\} -\left\{ 1-x_{k}\right\} \right)}{\prod_{j=1}^{n}(1-x_{j})}\\
 & =\frac{|\mathcal{W}_{0}|}{|\mathcal{W}_{k}|}\frac{\mathcal{E}_{m-1}^{0,k}}{1-z_{k}}.
\end{align*}
So the summation in (\ref{eq:bnds-UR}) becomes 
\begin{equation}
\mathcal{E}_{m}^{0}+\sum_{k=1}^{n}\frac{|\mathcal{W}_{0}|}{|\mathcal{W}_{k}|}\frac{\mathcal{E}_{m-1}^{0,k}}{1-z_{k}}.\label{eq:bnds-tmp1}
\end{equation}
The proof is completed using the asymptotics for $\mathcal{P}_{r}^{0}$
and $\mathcal{P}_{r}^{0,k}$ in (\ref{eq:bnds-Pr0-asym}) and (\ref{eq:bnds-Prk-asym}),
along with Newton identities (\ref{eq:bnds-newton-ids}), to obtain
the asymptotics of $\mathcal{E}_{m}^{0}$ and $\mathcal{E}_{m-1}^{0,k}$.
These along with the formula (\ref{eq:bnds-Wl}) for $\mathcal{W}_{\ell}$
are substituted into (\ref{eq:bnds-tmp1})to obtain the asymptotics
of that quantity.
\end{proof}
The methods that utilize accurate versions of the spectral differentiation
matrix \cite{BaltenspergerTrummer2003,DonSolomonoff1997} are often
employed with $m=1$. Therefore we limit the next theorem to $m=1$.
\begin{thm}
If $m=1$ and $\zeta=x_{0}=1$, the upper bound $U_{\mathcal{R}}'$
defined by (\ref{eq:bnds-URprime}) satisfies 
\[
U'_{\mathcal{R}}\precsim2u|f|\left(\frac{n^{2}}{3}+\sum_{k=1}^{n}\frac{4n^{2}(k+2)}{\pi^{2}k^{2}}\right)\sim\frac{8}{\pi^{2}}u|f|n^{2}\log n,
\]
where $u$ is the unit-roundoff, and with the assumption $nu<1/2$.\label{thm:bnds-URp-asym}\end{thm}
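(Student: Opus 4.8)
The plan is to read $U'_{\mathcal{R}}$ off (\ref{eq:bnds-URprime}) with $m=1$, namely $U'_{\mathcal{R}}=|f|\sum_{k=0}^{n}|w_{k,1}|\,\gamma_{k+2}$, and then feed in asymptotics for $|w_{k,1}|$ at $\zeta=x_{0}=1$ together with a bound on $\gamma_{k+2}$. The weights $|w_{k,1}|$ are exactly the summands that already appeared in the proof of Theorem~\ref{thm:bnds-UR-asym}: because $\zeta=x_{0}=1$ makes every $x_{j}-\zeta$ nonpositive, the elementary symmetric function $S_{n-1}$ in (\ref{eq:bnds-wkm-atzero}) has a fixed sign, so $|w_{k,1}|$ equals the $k$-th term of (\ref{eq:bnds-tmp1}) taken with $m=1$. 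Hence $|w_{0,1}|=\mathcal{E}_{1}^{0}=\mathcal{P}_{1}^{0}\sim\frac{2\zeta(2)}{\pi^{2}}n^{2}=\tfrac13 n^{2}$, and for $1\le k\le n-1$,
\[
|w_{k,1}|=\frac{|\mathcal{W}_{0}|}{|\mathcal{W}_{k}|}\,\frac{\mathcal{E}_{0}^{0,k}}{1-x_{k}}=\frac{2}{1-x_{k}}\sim\frac{4n^{2}}{\pi^{2}k^{2}},
\]
using $\mathcal{E}_{0}^{0,k}=1$, the ratio $|\mathcal{W}_{0}|/|\mathcal{W}_{k}|=2$ from (\ref{eq:bnds-Wl}), and $1-x_{k}=1-\cos(k\pi/n)\sim\pi^{2}k^{2}/(2n^{2})$ for $k=o(n)$; the lone $k=n$ term is $|w_{n,1}|=\tfrac12$. (These $|w_{k,1}|$ are just the magnitudes of the first row of the first-derivative Chebyshev differentiation matrix.)

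Next I would control the accumulation factors. One has $\gamma_{2}=2u/(1-2u)\sim 2u$, and for $1\le k\le n$, $\gamma_{k+2}=(k+2)u/(1-(k+2)u)\le (k+2)u/(1-(n+2)u)\precsim 2(k+2)u$, the factor $2$ coming from the hypothesis $nu<1/2$. Multiplying the weight asymptotics by these factors and summing, the $k=0$ term contributes $2u|f|\cdot\tfrac13 n^{2}$ and the terms $1\le k\le n-1$ contribute at most $2u|f|\sum_{k=1}^{n-1}(k+2)\cdot\frac{4n^{2}}{\pi^{2}k^{2}}$; appending $k=n$ only enlarges the bound by a lower-order amount since $|w_{n,1}|=O(1)$. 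This gives the first assertion
\[
U'_{\mathcal{R}}\precsim 2u|f|\left(\frac{n^{2}}{3}+\sum_{k=1}^{n}\frac{4n^{2}(k+2)}{\pi^{2}k^{2}}\right).
\]
For the second, I would compute $\sum_{k=1}^{n}\frac{4n^{2}(k+2)}{\pi^{2}k^{2}}=\frac{4n^{2}}{\pi^{2}}\bigl(\sum_{k=1}^{n}\tfrac1k+2\sum_{k=1}^{n}\tfrac1{k^{2}}\bigr)\sim\frac{4n^{2}}{\pi^{2}}\log n$, since the harmonic sum is $\sim\log n$ while $\sum 1/k^{2}=O(1)$ and the isolated $\tfrac13 n^{2}$ term is of lower order; hence $U'_{\mathcal{R}}\sim\frac{8}{\pi^{2}}u|f|n^{2}\log n$.

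The genuinely routine ingredients are the Newton-identity/zeta-function asymptotics for $\mathcal{P}_{r}^{0}$ (already set up in Section~2) and the harmonic-sum evaluation. The point that needs care is the bookkeeping of which terms are subdominant: the estimate $|w_{k,1}|\sim 4n^{2}/(\pi^{2}k^{2})$ is valid only for $k=o(n)$, and when $k$ is of order $n$ both $|w_{k,1}|$ and $1-x_{k}$ are $O(1)$, so those terms add only $O(n^{2}u)$ to $U'_{\mathcal{R}}$ — negligible next to the $O(n^{2}u\log n)$ produced by the small-$k$ range, hence swallowed by "$\precsim$" and absent from the final "$\sim$". A secondary point is justifying the uniform factor-of-$2$ slack $\gamma_{k+2}\precsim 2(k+2)u$ for $k\le n$; this is precisely what $nu<1/2$ provides, and it is why the prefactor in the bound is $2u$ rather than $u$.
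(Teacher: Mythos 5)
Your proposal is correct and follows essentially the same route as the paper's own proof: express $w_{k,1}$ at $\zeta=x_{0}=1$ via $\mathcal{E}_{1}^{0}$ and $\frac{|\mathcal{W}_{0}|}{|\mathcal{W}_{k}|}\frac{\mathcal{E}_{0}^{0,k}}{1-x_{k}}$, apply the $\mathcal{P}_{r}^{0}$ asymptotics from Section 2, and bound $\gamma_{k+2}$ by $2(k+2)u$ using $nu<1/2$. Your extra bookkeeping about the $k=\mathcal{O}(n)$ terms and the $k=n$ endpoint is a welcome tightening of details the paper leaves implicit.
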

\begin{proof}
If $\zeta=x_{0}=1$, the formula for $w_{k,1}$ (\ref{eq:bnds-wkm-atzero})with
$\zeta$ shifted to $0$ becomes 
\[
w_{k}S_{n-1}\left(\left\{ 0,1-x_{1},\ldots,1-x_{n}\right\} -\left\{ 1-x_{k}\right\} \right).
\]
If $k=0$, we have 
\[
w_{k,1}=\mathcal{E}_{1}^{0},
\]
and if $k>1$, we have
\[
w_{k,1}=\frac{\mathcal{W}_{0}}{\mathcal{W}_{1}}\frac{1}{1-z_{k}}\mathcal{E}_{m-1}^{\ell,k}.
\]
To complete the proof, we may obtain asymptotics for $w_{k,1}$ as
in the previous proof and use $\gamma_{k+1}\leq2(k+1)u$ which holds
under the assumption $nu<1/2$.
\end{proof}
Comparison of Theorems \ref{thm:bnds-UR-asym} and \ref{thm:bnds-URp-asym}
gives an indication of the advantage obtained by computing the weights
$w_{k,m}$ accurately followed by careful summation. Since $\gamma_{n}\approx nu$,
the bound for the first derivative in Theorem \ref{thm:bnds-UR-asym}
increases at the rate $n^{3}$. In Theorem \ref{thm:bnds-URp-asym},
the rate is $n^{2}\log n$. Thus an $n$ is replaced by $\log n$.
This is very similar to the advantage obtained using compensated summation
and other methods of precise summation \cite{Higham2002}. The comparison
also shows that a sound method for calculating the weights $w_{k,m}$
introduces only a modest amount of error.

\section{Discretization error }

In this section, we will give a discussion of the discretization error.
We show that the discretization error goes up a factor of $n^{2}$
with every additional derivative just like the rounding error. This
implies that the value of $n$ where the total error transitions from
mostly due to discretization to mostly due to rounding is independent
of the order of the derivative. This implication is illustrated computationally.

The Lagrange interpolant may be augmented with the remainder term
as follows \cite{ContedeBoor1980,Davis1975}:
\[
f(x)=\sum_{k=0}^{n}f(x_{k})\ell_{k}(x)+f[x_{0},x_{1},\ldots,x_{n},x]\prod_{k=0}^{n}(x-x_{j}).
\]
Here the $f[]$ notation is for divided differences. The finite difference
approximation to $f^{(m)}(x)$ is obtained by differentiating the
Langrange interpolant $m$ times. Therefore the discretization error
for the $m$-th derivative at $x=\zeta$ is equal to 
\[
\frac{d^{m}}{d\zeta^{m}}f[x_{0},\ldots,x_{n},\zeta](\zeta-x_{0})(\zeta-x_{1})\ldots(\zeta-x_{n}).
\]
The product rule for differentiation gives
\begin{equation}
\sum_{j=0}^{m}\binom{m}{j}\frac{d^{j}}{d\zeta^{j}}f[z_{0},\ldots,z_{n},\zeta]\:(m-j)!\, S_{n+1-m+j}\left(\zeta-x_{0},\ldots,\zeta-x_{n}\right).\label{eq:bnds-discerr-divdiff'}
\end{equation}
Using standard properties of divided differences, and assuming $f$
to be differentiable as many times as necessary, we may write the
discretization error as 
\[
\sum_{j=0}^{m}m!f[x_{0},\ldots,x_{n},\zeta^{(j+1)}]\, S_{n+1-m+j}\left(\zeta-x_{0},\ldots,\zeta-x_{n}\right),
\]
 where $\zeta^{(j+1)}$ stands for $\zeta$ repeated $j+1$ times
in the divided difference. Here we have used an identity for differentiating
a divide difference \cite{ContedeBoor1980}. If $x_{0},\ldots,x_{n}$
are the Chebyshev points and the divided differences are assumed to
be relatively uniform throughout the domain, this expression above
shows that the discretization error too is likely to be maximum at
the edges. Therefore we take $\zeta=x_{0}=1$ to get 
\[
U_{\mathcal{D}}=\sum_{j=0}^{m}m!f[1^{(j+2)},x_{1},\ldots x_{n}]\, S_{n+1-m+j}(0,1-x_{1},\ldots,1-x_{n}).
\]
We will denote the divided difference $f[1^{(j+2)},x_{1},\ldots x_{n}]$
by $D_{j+2}$. The expression for the discretization error becomes
\begin{equation}
U_{\mathcal{D}}=\sum_{j=0}^{m}m!D_{j+2}S_{n+1-m+j}(1-x_{1},\ldots,1-x_{n}).\label{eq:disc-UD}
\end{equation}

As $n$ increases, the asymptotics of $U_{\mathcal{D}}$ are given
by 
\begin{align}
U_{\mathcal{D}} & \sim4\left(\frac{D_{2}}{n2^{n}}\right)n^{2}\nonumber \\
 & \sim\frac{8}{3}\left(\frac{D_{2}}{n2^{n}}\right)n^{4}+\frac{8nD_{3}}{2^{n}}\nonumber \\
 & \sim\frac{4}{5}\left(\frac{D_{2}}{n2^{n}}\right)n^{6}+\frac{8n^{3}D_{3}}{2^{n}}+\frac{24nD_{4}}{2^{n}}\nonumber \\
 & \sim\frac{16}{105}\left(\frac{D_{2}}{n2^{n}}\right)n^{8}+\frac{16n^{5}D_{3}}{5.2^{n}}+\frac{32n^{3}D_{4}}{2^{n}}+\frac{96nD_{5}}{2^{n}}\label{eq:disc-UD-asym}
\end{align}
for orders of differentiation $m=1,2,3,4$, respectively. The symmetric
function $S_{n+1-m+j}$ in (\ref{eq:disc-UD}) is equal to $\mathcal{W}_{0}\mathcal{E}_{m-j}^{0}$.
From this point the symmetric functions may be estimated as in the
previous section to derive (\ref{eq:disc-UD-asym}). 

We do not attempt to estimate the divided differences $D_{j+2}$.
However they may be estimated using contour integration as shown in
\cite{WeidemanTrefethen1988}. If $f(x)=\sin Kx$, and $K=n\pi/\eta$
with $\eta>\pi$, implying more than $\pi$ points per wavelength,
the divided difference $D_{2}$ decreases exponentially with $n$.
On the other hand, if $K$ has a fixed value such as $K=2\pi$ the
divided difference decreases super-exponentially with $n$. For functions
such as $f(x)=\sin\pi x$, the divided differences $D_{1},$$D_{2}$,
and so on typically vary only by constant factors and the asymptotics
in (\ref{eq:disc-UD-asym}) may be expected to be dominated by the
$D_{2}$ term.

The interpolation error at $x=\zeta$ is given by 
\[
f[x_{1},x_{2},x_{3},\ldots,x_{n},\zeta](\zeta-x_{1})\ldots(\zeta-x_{n}).
\]
Comparison with (\ref{eq:bnds-Wl}) shows that the interpolation error
is approximately
\begin{equation}
f[x_{1},x_{2},x_{3},\ldots,x_{n},\zeta]\frac{C}{n2^{n}}\label{eq:disc-interp-error}
\end{equation}
for $\zeta\in(x_{1},x_{0})$ and where $C$ is $\mathcal{O}(1)$. 

Comparison of Theorem \ref{thm:bnds-UR-asym} and (\ref{eq:disc-UD-asym})
suggests that the transition from discretization error to rounding
error should be relatively independent of the order of the derivative.
Every time the order goes up by $1$, both estimates increases by
a factor $n^{2}$, ignoring constants. Therefore the transition should
be at about the same value of $n$ independently of the order of differentiation.
This phenomenon is illustrated in Figure \ref{fig:disc-rerr-transition}.

\begin{figure}
\begin{centering}
\includegraphics[scale=0.35]{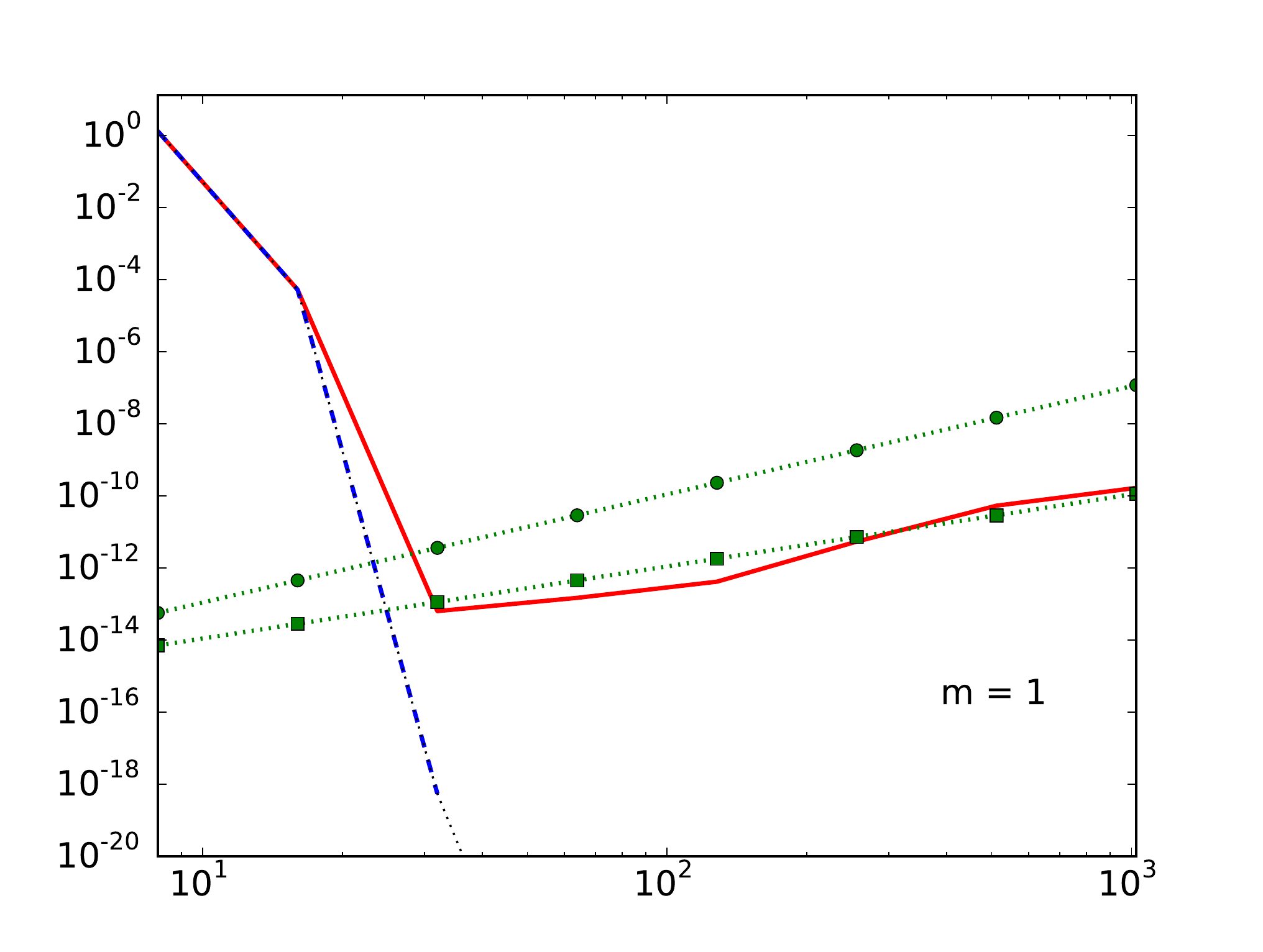}\includegraphics[scale=0.35]{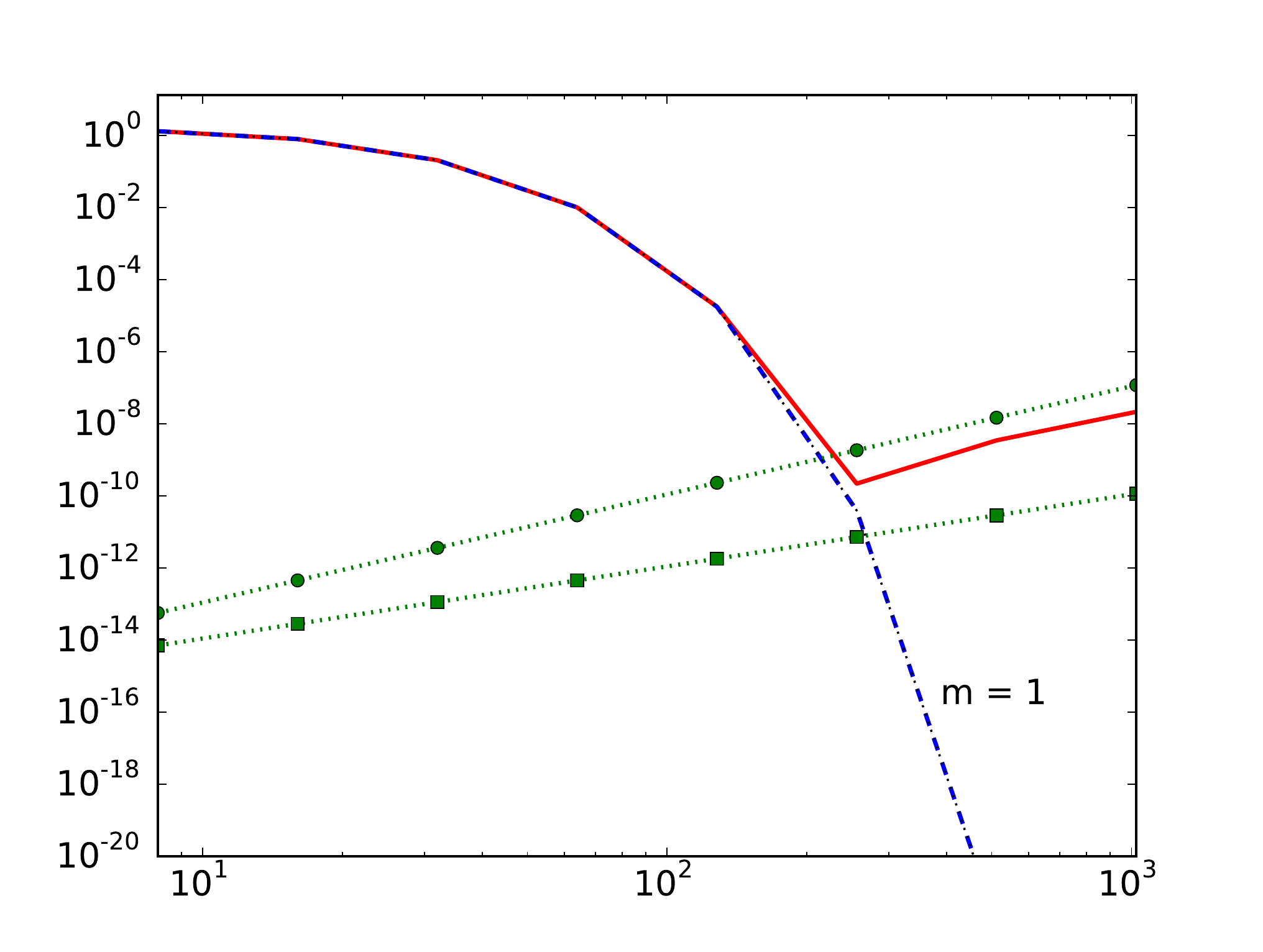}
\par\end{centering}

\begin{centering}
\includegraphics[scale=0.35]{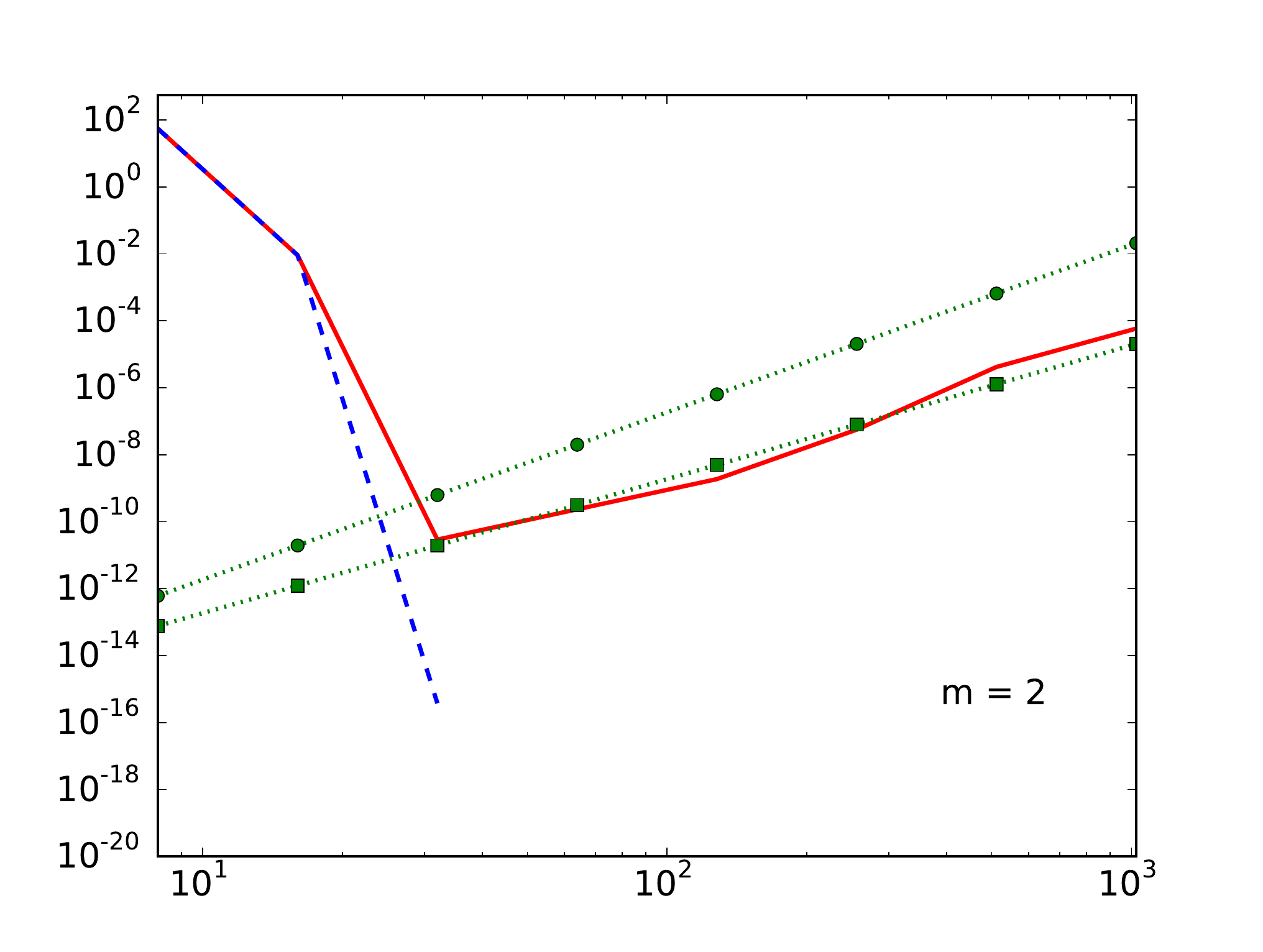}\includegraphics[scale=0.35]{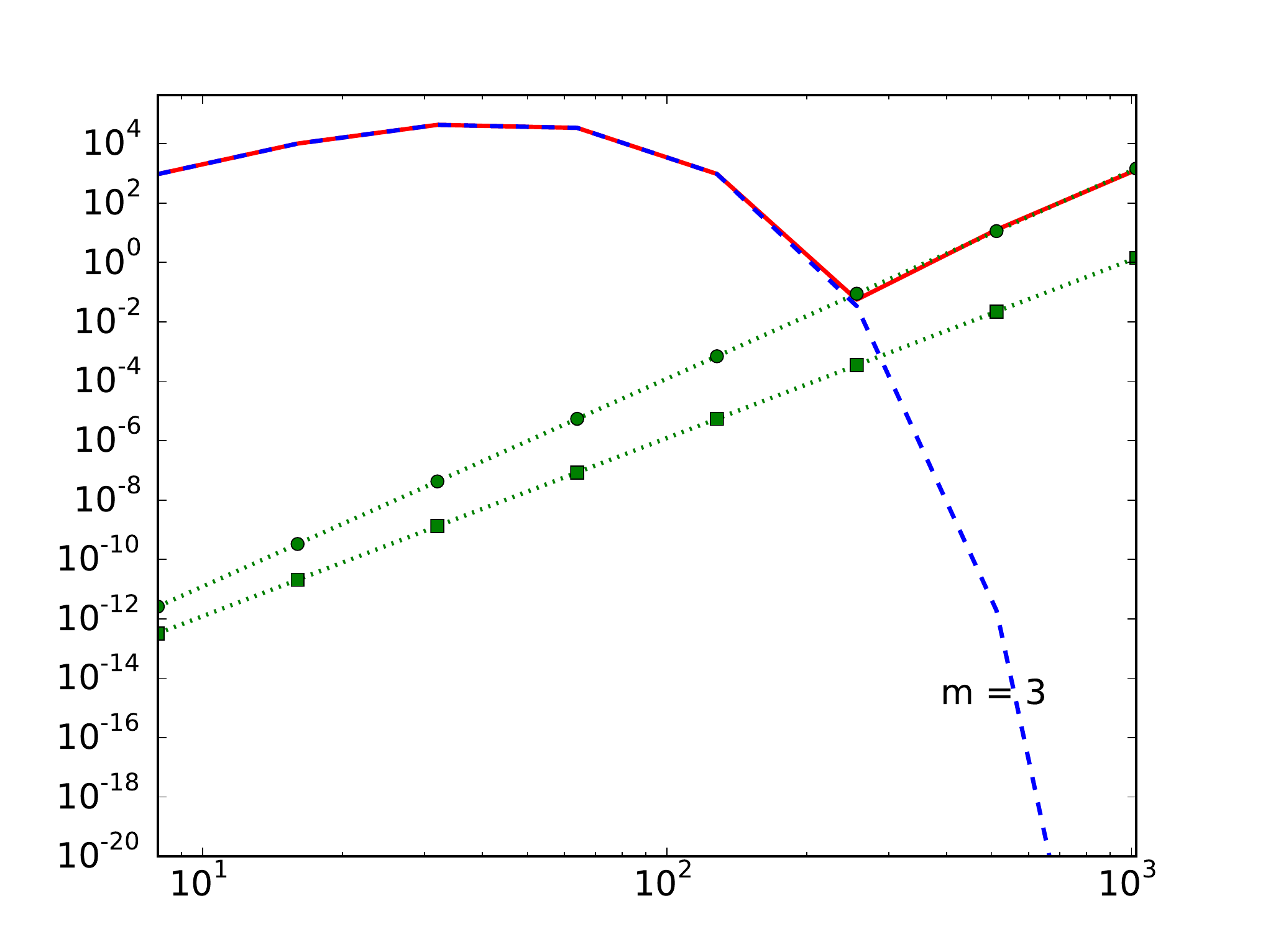}
\par\end{centering}

\begin{centering}
\includegraphics[scale=0.35]{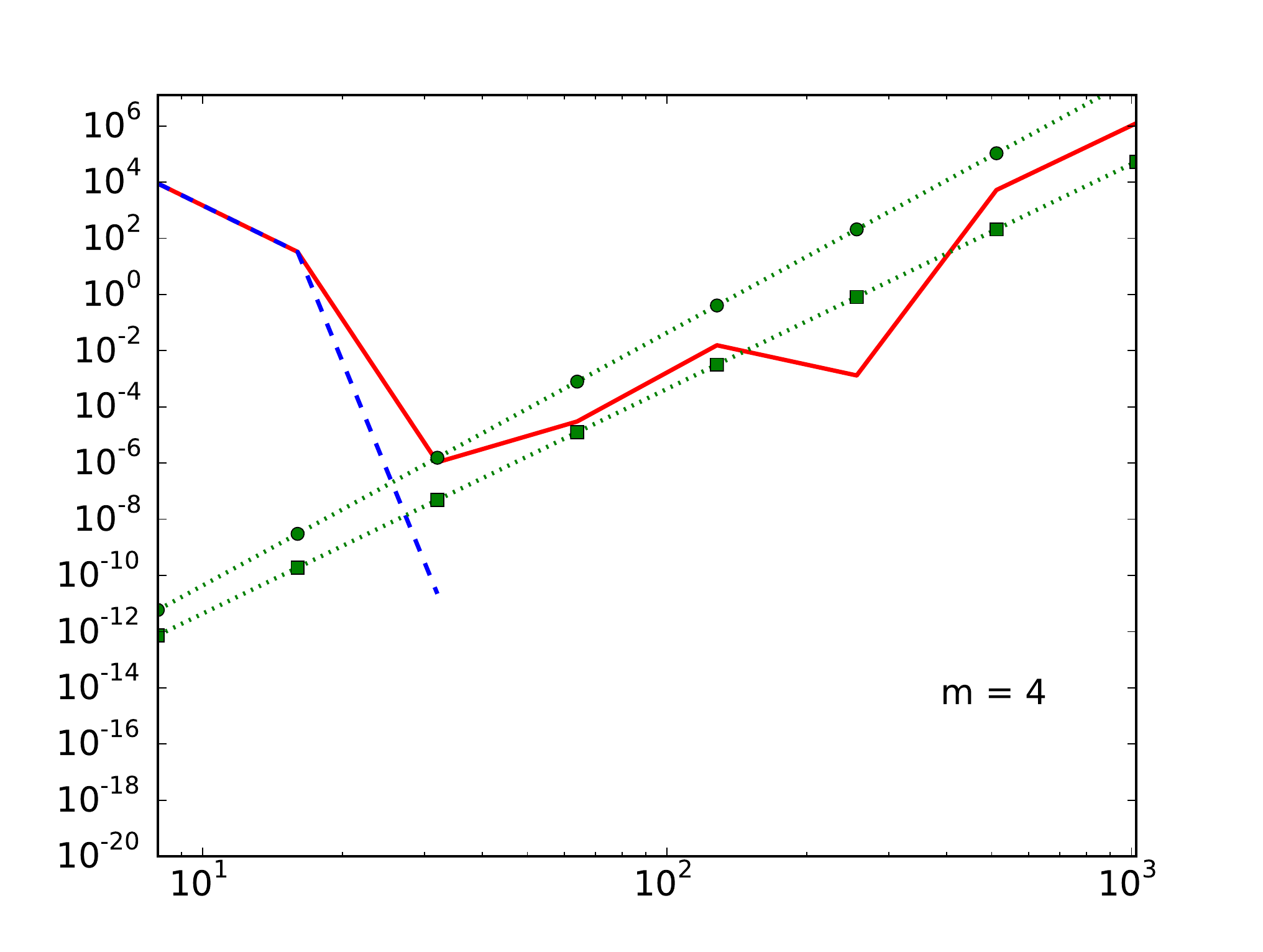}\includegraphics[scale=0.35]{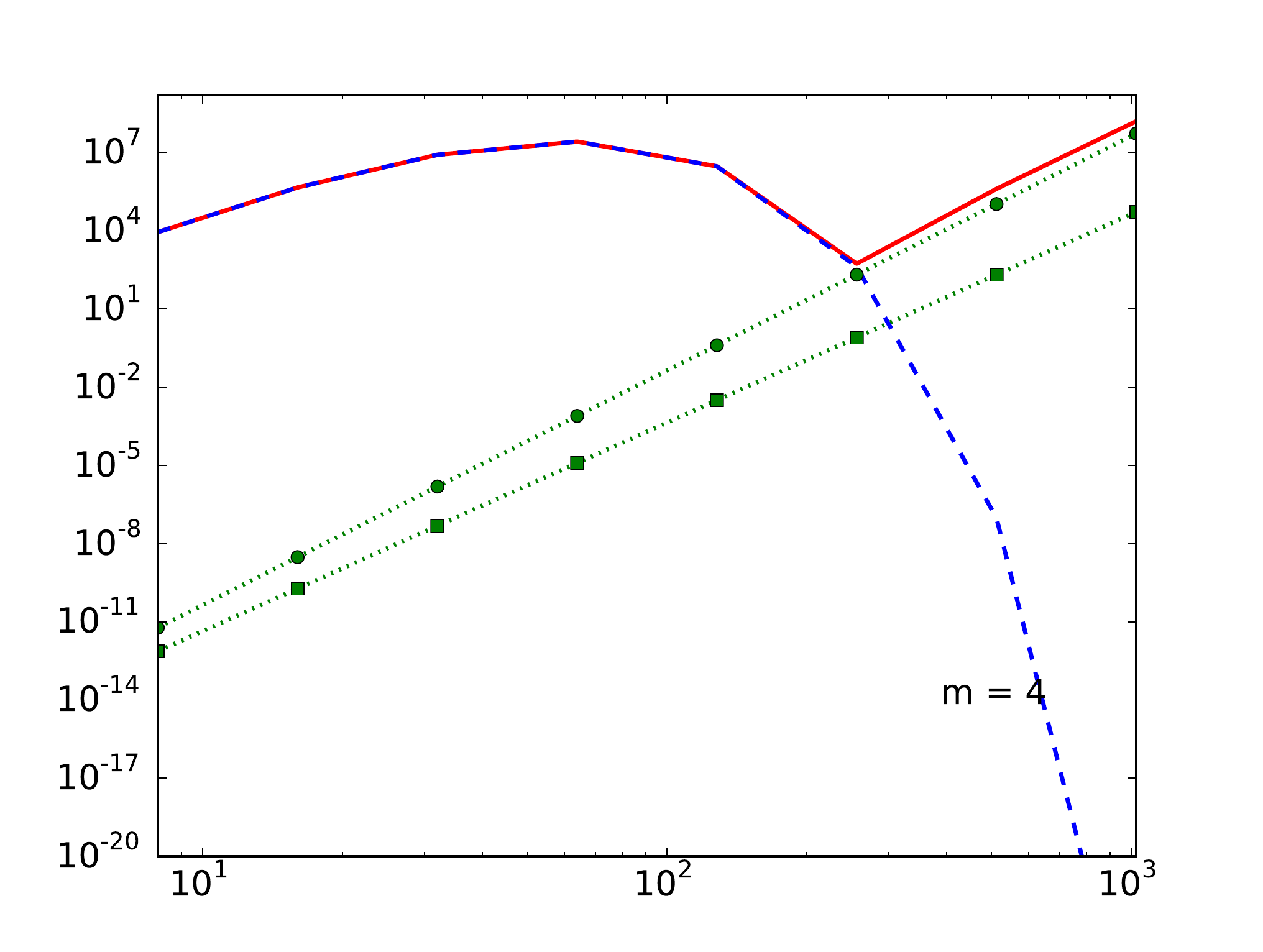}\caption{Plots of error vs $n$. The plots on the left are for $f(x)=\sin2\pi x$.
The plots on the right are for $f(x)=\sin Kx$ with $K=n\pi/4$ implying
$4$ points per wavelength. The solid line is the actual error. The
dashed line is the discretization error computed using (\ref{eq:bnds-discerr-divdiff'}),
with divided differences computed in extended precision. The dotted
lines are the asymptotic rounding error bounds of Theorem \ref{thm:bnds-UR-asym}.
The dotted lines with circles replace $\gamma_{6n+4-m}$ by $nu$
and the dotted lines with squares replace that quantity by $u$.\label{fig:disc-rerr-transition}}

\par\end{centering}

\end{figure}

\section{Choice of the mapping parameter}

The choice of the parameter $\alpha$ is taken to be given by 
\begin{equation}
\left(\frac{1-\sqrt{1-\alpha^{2}}}{\alpha}\right)^{n}=n^{\beta}u\label{eq:choice-balance-eqn}
\end{equation}
with $\beta=0$ \cite{DonSolomonoff1997,KosloffTalEzer1993} and with
$u$ being the unit roundoff. We will attempt to justify this choice
for all orders of derivative $m$.

Given a function $f(x)$, such as $f(x)=\sin Kx$, the mapped function
is $F(\xi)=f(g(\xi))$ where $g(\cdot)$ is the mapping (\ref{eq:intro-map-fn-g}).
We will first argue for (\ref{eq:choice-balance-eqn}) as a balance
between the discretization error and the rounding error in interpolation.
The analysis of rounding error that arises in spectral differentiation
is precise. The indeterminacy in the rounding error is limited to
a factor of $n$, as may be seen from Figure \ref{fig:disc-rerr-transition}.
However, the discretization errors cannot be estimated as precisely
because the divided differences that arise in (\ref{eq:disc-UD-asym})
and (\ref{eq:disc-interp-error}) are not known within factors of
$n$.

If $f(x)=1$ then $F(\xi)=g(\xi)=\arcsin\alpha\xi/\arcsin\alpha$.
The Chebyshev series of $F(\xi$) may be computed from the Laurent
series of $F((z+1/z)/2)$ centered at $z=0$ (if $z=e^{i\theta}$
then $\xi=\cos\theta$, and $(z^{n}+1/z^{n})/2=\cos n\theta$ is the
Chebyshev polynomial $T_{n}(\xi)$. If $F(\xi)=g(\xi)$, the singularities
are at 
\[
z=\frac{\pm1\pm\sqrt{1-\alpha^{2}}}{\alpha}.
\]
Therefore the coefficients of $z^{\pm n}$ in the Laurent series fall
off in magnitude at the rate 
\[
\left(\frac{1-\sqrt{1-\alpha^{2}}}{\alpha}\right)^{n}
\]
and so does the coefficient of $T_{n}(\xi)$ in the Chebyshev series
of $g(\xi)$. In fact, one can be more precise. Because the singularities
of $g(\xi)$ at $\xi=\pm1/\alpha$ are of the type $(\xi\pm1/\alpha)^{1/2},$
the coefficients will fall off at the rate 
\begin{equation}
n^{-3/2}\left(\frac{1-\sqrt{1-\alpha^{2}}}{\alpha}\right)^{n}.\label{eq:choice-interp-err}
\end{equation}
This may be taken as an estimate of the discretization error in $g(\xi)$. 

When $f(x)=\sin Kx$, the estimate (\ref{eq:choice-interp-err}) for
interpolation error will still hold but with additional modulation
factors of the type $n^{\beta}$ with $\beta>0$. These modulating
factors are not precisely known but they certainly exist. For example,
if $K=n\pi/4$, implying $4$ points per wavelength, the number of
terms in the expansion of $\sin Kg(\xi)$ (in powers of $g(\xi)$)
before the exponentially decay of coefficients kicks in, is greater
than $\mathcal{O}(n)$.

As far as the rounding error in interpolation is concerned, this quantity
is bounded by $Cn\log nu$, with $C$ being a small constant \cite{Higham2004}.
Thus balancing of discretization and rounding errors leads to (\ref{eq:choice-interp-err})
but with an indeterminacy in the exact value of $\beta$ and in constants.
The appropriate balance, ignoring constants, is given by (\ref{eq:choice-balance-eqn}).
Empirically, $\beta=0$ is found to be a good choice although other
$\beta$ such as $\beta=-1.5$ seem to do just as well. See Figure
\ref{fig:choice-varybeta}.

\begin{figure}

\centering{}\includegraphics[scale=0.35]{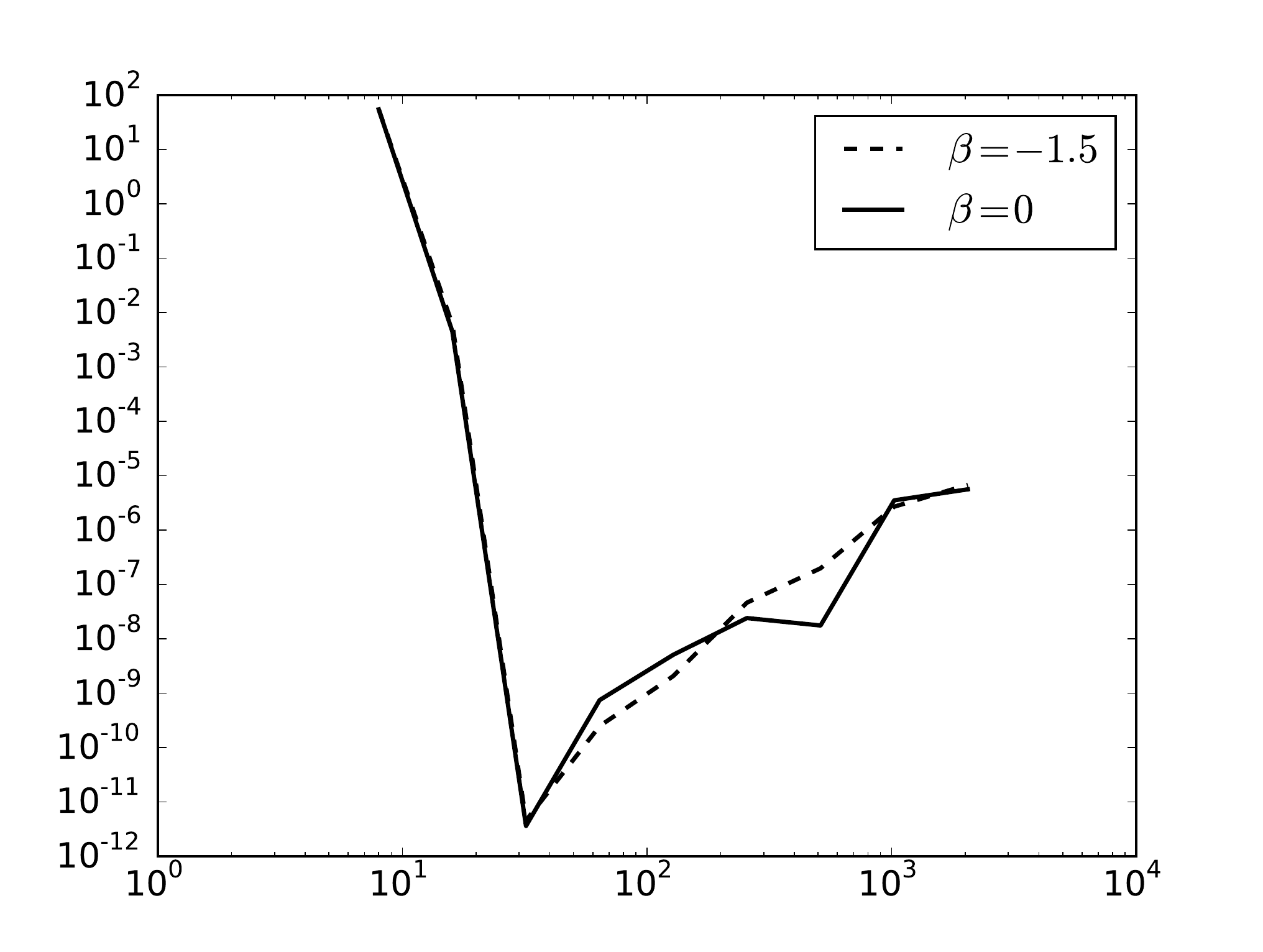}\includegraphics[scale=0.35]{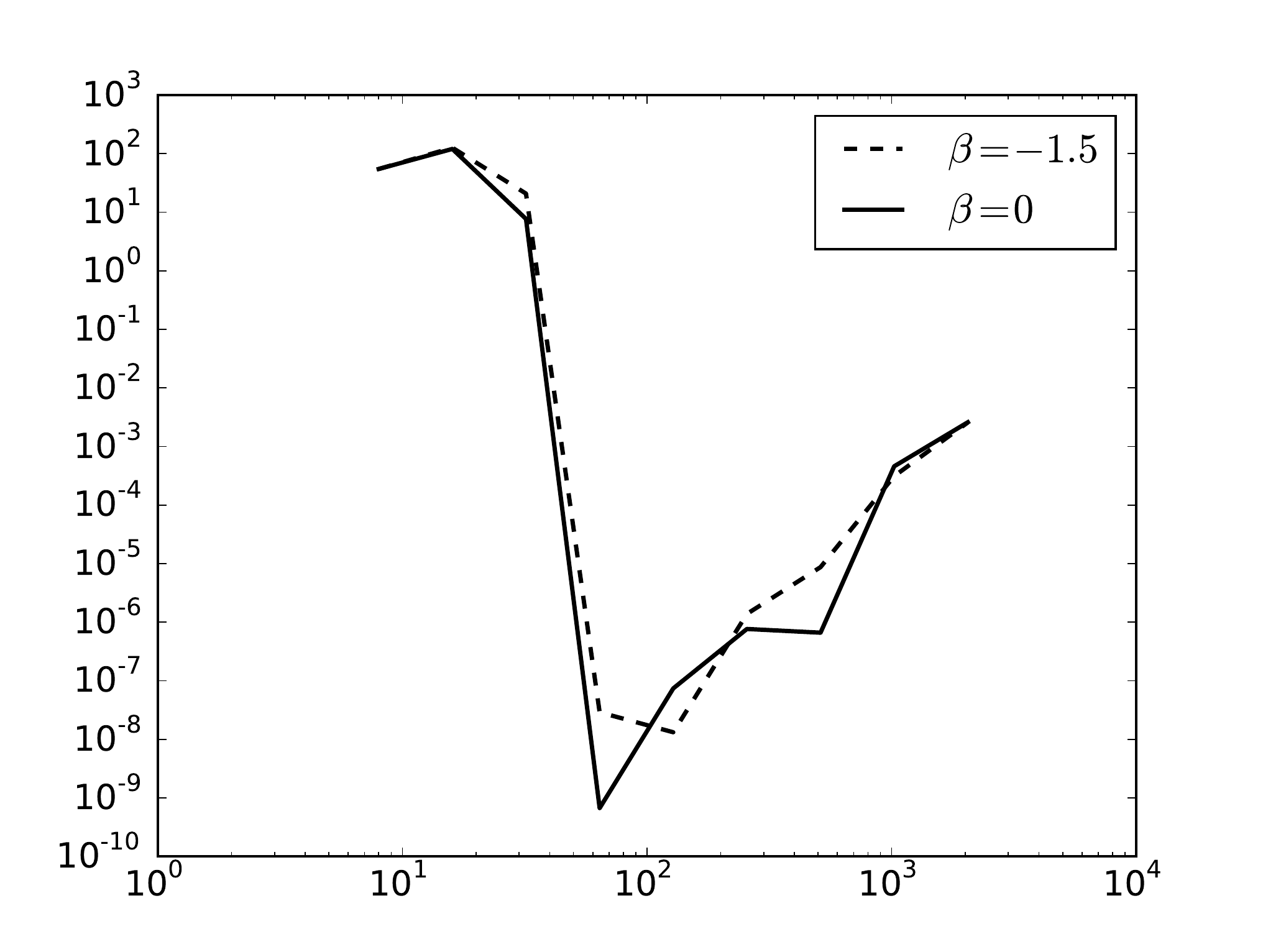}\caption{Graphs of error vs $n$ for $\sin2\pi x$ and $\sin n\pi x/4$. The
mapping parameter $\alpha$ is determined using (\ref{eq:choice-balance-eqn}).
The errors are for the $2$nd derivative.\label{fig:choice-varybeta}}
\end{figure}

As long as constants are ignored, (\ref{eq:choice-interp-err}) remains
the right equation for balancing errors for the first derivative as
well. The derivative $F'(\xi)$ is approximated by spectral differencing
at the Chebyshev points. The discretization error as well as the interpolation
error at the edge $\xi=1$ go up by a factor of $n^{2}$ from Theorem
\ref{thm:bnds-UR-asym}, (\ref{eq:disc-UD-asym}), and (\ref{eq:disc-interp-error}).
The errors are pulled back into the $x$-domain through the same $g^{-1}$
transformation, and the balancing equation remains the same.

For higher derivatives $F^{(m)}(\xi)$ the balancing equation again
remains the same, ignoring constants. With every increase in $m$
by $1$, the discretization and rounding errors both go up by a factor
of $n^{2}$. Both errors are pulled back using the same transformation
$g^{-1}$. If derivatives $f^{(m)}(x)$ are computed by successively
taking the first derivative (as in \cite{DonSolomonoff1997}), rather
than using a differencing scheme for the $m$-th derivative directly,
the argument changes only slightly. 

The discrete cosine transform is a faster method of approximating
$F'(\xi$). However, it appears to incur greater rounding error \cite{DonSolomonoff1997}.
This suggests trying to balance errors in (\ref{eq:choice-balance-eqn})
with $\beta>0$, as the greater error can only be due to rounding.
In Figure (\ref{fig:choice-dct}), $\beta=0.5$ does give smaller
errors for $f(x)=\sin2\pi x$ and the rounding errors vary more smoothly
for $f(x)=\sin n\pi x/4$.

\begin{figure}
\centering{}\includegraphics[scale=0.35]{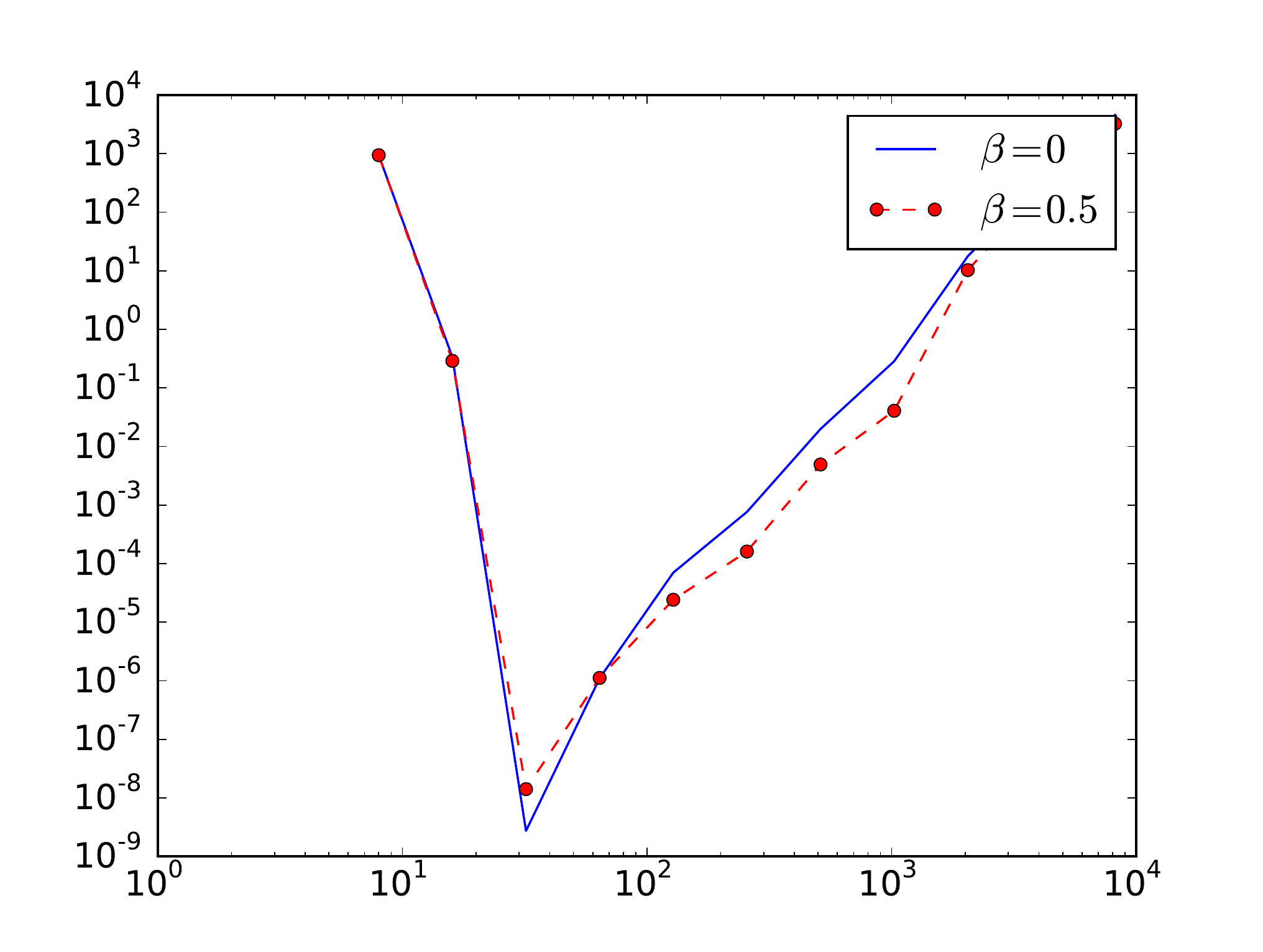}\includegraphics[scale=0.35]{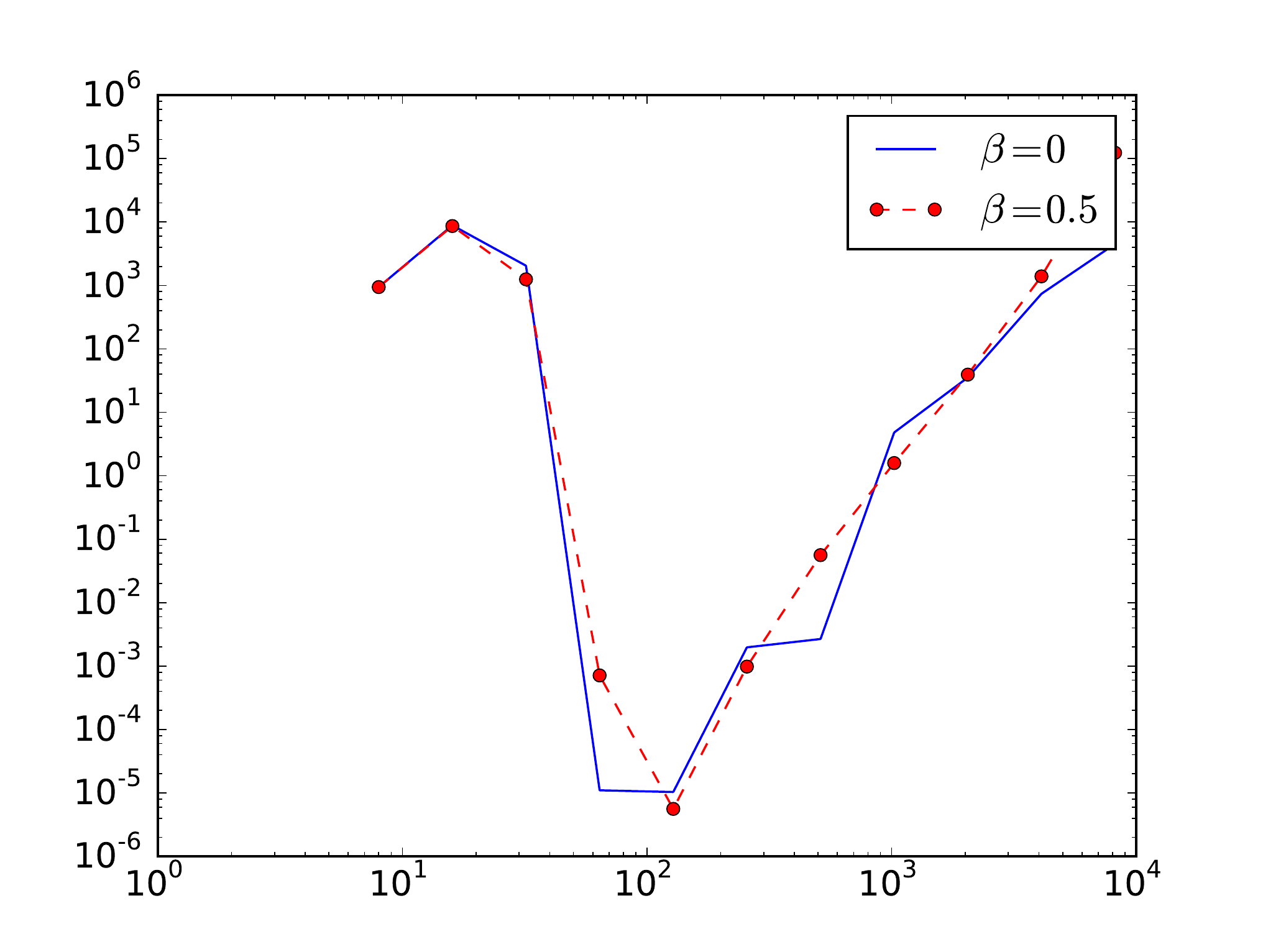}\caption{Graphs of error vs $n$ for $\sin2\pi x$ and $\sin n\pi x/4$. The
mapping parameter $\alpha$ is determined using (\ref{eq:choice-balance-eqn}).
The errors are for the $3$rd derivative.\label{fig:choice-dct}}
\end{figure}

\section{Acknowledgements}

I am very grateful to Hans Johnston for many helpful discussions.
This research was partially supported by NSF grant DMS-1115277. 

\bibliographystyle{plain}
\bibliography{references}

\end{document}